\numberwithin{figure}{section}
\numberwithin{table}{section}
\DeclareMathOperator{\sech}{sech}
\DeclareMathOperator{\csch}{csch}
\theoremstyle{plain}
\newcommand{\norm}[1]{\left\Vert#1\right\Vert}
\newcommand{\abs}[1]{\left|#1\right|}
\newcommand{\R}{\mathbb R}
\newcommand{\Z}{\mathbb{Z}}
\newcommand{\N}{\mathbb N}
\newtheorem{theorem}{Theorem}[section]
\newtheorem{lemma}[theorem]{Lemma}
\newtheorem{definition}[theorem]{Definition}
\newtheorem{remark}[theorem]{Remark}
\numberwithin{equation}{section}     
\numberwithin{figure}{section}
\numberwithin{table}{section}
\newcounter{asnr}
\ifnum\value{asnr}=0 \stepcounter{asnr} 
\newcounter{defnr}
\ifnum\value{defnr}=0 \stepcounter{defnr} 
\numberwithin{equation}{section} \allowdisplaybreaks
\title[finite difference schemes for fractional KdV equation]
{Fully discrete finite difference schemes for the Fractional Korteweg-de Vries equation}
\date{}
\author[M. Dwivedi]{Mukul Dwivedi}
\address[Mukul Dwivedi]{\newline
Department of Mathematics, 
	Indian Institute of Technology Jammu,
	Jagti, NH-44 Bypass Road, Post Office Nagrota,
	Jammu - 181221, India}
\email[]{mukul.dwivedi@iitjammu.ac.in}
\author[T. Sarkar]{Tanmay Sarkar}
\address[Tanmay Sarkar]{\newline
	Department of Mathematics, 
	Indian Institute of Technology Jammu,
	Jagti, NH-44 Bypass Road, Post Office Nagrota,
	Jammu - 181221, India}
\email[]{tanmay.sarkar@iitjammu.ac.in}
\subjclass[2020]{primary 35Q53; secondary 35R11; 65M12; 65M06.}
\keywords{Fractional Laplacian, Korteweg-de Vries equation, finite difference schemes, convergence. 
}
\thanks{}
\begin{document}
\begin{abstract}
In this paper, we present and analyze fully discrete finite difference schemes designed for solving the initial value problem associated with the fractional Korteweg-de Vries (KdV) equation involving the fractional Laplacian. We design the scheme by introducing the discrete fractional Laplacian operator which is consistent with the continuous operator, and posses certain properties which are instrumental for the convergence analysis.
Assuming the initial data \(u_0 \in H^{1+\alpha}(\mathbb{R})\), where \(\alpha \in [1,2)\), our study establishes the convergence of the approximate solutions obtained by the fully discrete finite difference schemes to a classical solution of the fractional KdV equation. Theoretical results are validated through several numerical illustrations for various values of fractional exponent $\alpha$. Furthermore, we demonstrate that the Crank-Nicolson finite difference scheme preserves the inherent conserved quantities along with the improved convergence rates.
\end{abstract}
\maketitle

\section{Introduction}
We consider a Cauchy problem associated with a fractional Korteweg–de Vries (KdV) equation which is a nonlinear, non-local dispersive equation that has gained prominence in the area of weakly nonlinear internal long waves. More precisely, the equation reads
\begin{equation}\label{fkdv}
\begin{cases}
      u_t+\left(\frac{u^2}{2}\right)_x-(-\Delta)^{\alpha/2}u_x=0,  \qquad & (x,t) \in Q_T:= \mathbb{R}\times (0,T],\\
     u(x,0) = u_0(x), \qquad & x \in \mathbb{R},
\end{cases}
\end{equation}
where $T>0$ is fixed, $u_0$ is the prescribed initial condition and $u:Q_T\rightarrow \R$ represents the unknown solution.
The non-local operator $-(-\Delta)^{\alpha/2}$ in \eqref{fkdv} corresponds to the fractional Laplacian, a distinctive feature that introduces non-locality into the dynamics of the equation. The parameter $\alpha \in [1, 2)$ dictates the fractional order and plays a pivotal role in shaping the behavior of the solutions. For $\phi\in C_{c}^{\infty}(\mathbb{R})$, the fractional Laplacian is defined as:   
 \begin{equation}\label{fracL}
     -(-\Delta)^{\alpha/2}[\phi](x) = c_\alpha \text{P.V.}\int_{\mathbb{R}} \frac{\phi(y)-\phi(x)}{|y-x|^{1+\alpha}}\,dy,
 \end{equation}
 for some constant $c_\alpha>0$, which is described in \cite{di2012hitchhikers,dutta2021operator,koley2021multilevel,kwasnicki2017ten}. 
 
 Fractional Laplacian involved differential equation has emerged as a powerful and versatile tool across diverse scientific, economic and engineering domains, owing to the computational advantages inherent in integral operator and fractional-order derivative. The widespread adoption of fractional Laplacian is attributed to their effectiveness in handling localized computations, ranging from image segmentation to the study of water flow in narrow channels and the complexities of plasma physics. The historical evolution of fractional-order operators further enhances their efficacy, providing a solid foundation for modeling various scenarios with precision and versatility. As these operators continue to play a pivotal role in advancing computational methodologies and their impact on scientific and applied disciplines remains profound.

Researchers have extensively studied the well-posedness, both locally and globally, of the fractional KdV equation \eqref{fkdv}. 
We will not be able to address all the literature here, but to mention the relevant ones which are related to the current work.
When $\alpha = 2$, the equation \eqref{fkdv} simplifies to the classical KdV equation \cite{Bona1975KdV, dutta2021operator, korteweg1895xli}, a well-established model for solitons and nonlinear wave phenomena. Recent decades have seen in-depth investigations into the well-posedness of the Cauchy problem associated to the KdV equation, for instance, see \cite{kato1983cauchy,killip2019kdv} and references therein. For $\alpha = 1$, the equation \eqref{fkdv} becomes the Benjamin-Ono equation \cite{fokas1981hierarchy,fonseca2013ivp,kenig1994generalized}, designed to describe weakly nonlinear internal long waves. Well-posedness theory for the Benjamin-Ono equation is well-established in \cite{ponce1991global,kenig1994generalized, tao2004global} and references therein. 
The well-posedness of generalized dispersion model \eqref{fkdv} has also been studied in the literature. For instance, Kenig et al. \cite{kenig1991well,kenig1993cauchy} showed the existence and uniqueness of \eqref{fkdv} for $u_0\in H^s(\R),~ s>3/4$. Afterwards, 
Herr et al. \cite{herr2010differential} proved the global well-posedness by using frequency dependent renormalization technique for $L^2$ data. For more related work, one may refer to \cite{ehrnstrom2019enhanced, kenig2020unique,molinet2018well}.

Several numerical methods have been proposed for solving the  equation \eqref{fkdv}. In particular, for the case $\alpha=2$,
fully discrete finite difference schemes have been developed by  \cite{Amorim2013mKdV,dwivedi2023convergence,holden2015convergence,wang2021high}. Courtes et al. \cite{courtes2020error} established error estimates for a finite difference method while Skogestad et al. \cite{skogestad2009boundary} conducted a comparison between finite difference schemes and Chebyshev methods. Li et al. \cite{li2006high} proposed high-order compact schemes for linear and nonlinear dispersive equations. Apart from this, Galerkin schemes have been designed in \cite{dutta2015convergence, dutta2016note}. In case of $\alpha=1$,
Thomee et al. \cite{thomee1998numerical} introduced a fully implicit finite difference scheme, and Dutta et al. \cite{dutta2016convergence} established the convergence of the fully discrete Crank-Nicolson scheme. Furthermore, Galtung \cite{galtung2018convergent} designed a Galerkin scheme and proved its convergence to the weak solution.
However, for the range $\alpha\in(1,2)$, there are limited literature on the numerical methods of \eqref{fkdv}. An operator splitting scheme is introduced in \cite{dutta2021operator} and recently, a Galerkin scheme is developed in \cite{dwivedi2023stability}. Hereby our focus is not to develop the most efficient scheme, rather to design the convergent finite difference schemes. Since the convergence analysis is based on the semi-discrete work of Sj\"{o}berg \cite{sjoberg1970korteweg}, there is a need of fully discrete schemes which are computationally efficient.

Motivated by the work of Holden et al. \cite{holden2015convergence} and Dutta et al. \cite{dutta2015convergence} for the KdV equation and Benjamin-Ono equation respectively, we have developed two finite difference schemes to obtain the approximate solutions of the fractional KdV equation \eqref{fkdv}. The proposed schemes differ in their temporal discretization along with the averages of approximate solution considered in the convective term. The main objective of this paper is to show that the approximate solutions obtained by the difference schemes converge uniformly to the classical solution of the fractional KdV equation \eqref{fkdv} in $C(\R\times[0,T])$ provided the initial data is in $H^{1+\alpha}(\R)$.

 In this paper, our focus is in the discretization of the fractional Laplacian and its subsequent numerical implementation. The derivation of two schemes involves a treatment of the nonlinear terms, with theoretical foundations and proof ideas borrowed from the seminal works of Holden et al. \cite{holden2015convergence} and Dutta et al. \cite{dutta2015convergence}. However, the main challenge revolves around the precise computation of the fractional Laplacian and establishing its discrete properties. 
Moreover, the Crank-Nicolson finite difference scheme is designed in such a way that it exhibits conservation properties analogous to those observed in classical solutions of the fractional KdV equation, as stipulated in \cite{kenig1994generalized}. These conserved quantities include mass, momentum and energy, defined as:
\begin{align*}
        C_1(u): & = \int_{\R} u(x,t)~dx, \quad C_2(u):= \int_{\R} u^2(x,t)~dx,\\
         C_3(u):&= \int_{\R} \left((-(-\Delta)^{\alpha/4}u)^2 - \frac{u^3}{3}\right)(x,t)~dx, \qquad \alpha \in [1,2).
\end{align*}
Through numerical illustrations, we empirically demonstrate the superior performance of the Crank-Nicolson scheme over the Euler implicit scheme, aligning with expectations.

The organization of the paper is as follows: In Section \ref{sec2}, we present the necessary notations involving numerical discretization and introduce discrete operators. In addition, we show that discrete fractional Laplacian is consistent and satisfies certain properties. In Section \ref{sec3}, we introduce an implicit finite difference scheme which is solvable, stable and converges to the weak solution of \eqref{fkdv}. In Section \ref{sec4}, we design a Crank-Nicolson finite difference scheme. The scheme is shown to be stable and convergent for the initial data in $H^{1+\alpha}(\R)$.
We validate our theoretical results through the numerical illustrations in Section \ref{sec5} for various values of $\alpha\in[1,2]$. Finally, we end up with some concluding remarks in Section \ref{sec6}.
    
\section{Numerical discretization and discrete operators}\label{sec2}
\label{sec:headings}
In establishing a foundational framework, we introduce some essential notations, definitions, and inequalities. Let $\Delta x$ and $\Delta t$ denote the small mesh sizes corresponding to the spatial and temporal variables respectively. We discretize both the spatial and temporal axes using these mesh sizes, defining $x_j = j\Delta x$ for $j\in\mathbb{Z}$ and $t_n = n\Delta t$ for $n\in\N_0:=\N\cup\{0\}$. Subsequently, a fully discrete grid function in both space and time is defined by $u : \Delta x \mathbb{Z} \times \Delta t \mathbb{N}_0 \to \mathbb{R}^{\mathbb{Z}}$ with $u(x_j,t_n) =: u_j^n$, and we represent $(u^n)_{j\in\mathbb{Z}}$ as $u^n$ for brevity. Notably, $u^n$ is constructed as a spatial grid function.

We further introduce difference operators for a function $v:\mathbb{R}\to\mathbb{R}$. These operators are defined as follows:
\begin{equation}\label{frdiffs}
     D_{\pm}v(x) = \pm\frac{1}{\Delta x}\big(v(x\pm\Delta x) - v(x)\big), \qquad D = \frac{1}{2}(D_+ +D_-).
\end{equation}
We also introduce the shift operators $$S^{\pm}v(x) = v(x\pm\Delta x),$$ and the averages $\widetilde{v}(x)$ and $\bar{v}(x)$ are defined by
\[\widetilde{v}(x) := \frac{1}{3}\left(S^+v(x) + v(x) + S^-v(x) \right), \qquad \Bar{v}(x) := \frac{1}{2}(S^+ + S^-)v(x).\]
The difference operators satisfy the following product formulas
    \begin{align}
        \label{prod1}D(vw) &= \bar{v}Dw + \bar{w}Dv, \\
        \label{prod2}D_{\pm}(vw) & = S^{\pm}vD_{\pm}w + wD_{\pm}v = S^{\pm}w D_{\pm}v+vD_{\pm}w.
    \end{align}
We consider the usual $\ell^2$-inner product, denoted as $\langle \cdot,\cdot \rangle$, defined by  
\begin{equation}\label{normdef}
    \langle v,w\rangle = \Delta x\sum_{j\in\mathbb{Z}}v_jw_j, \qquad \norm{v}=\norm{v}_2=\langle v,v \rangle^{1/2}, \qquad v, w \in \ell^2.
\end{equation}
As a consequence, we have the following estimates 
    \begin{equation}\label{supnorm}
        \|v\|_\infty :=\max\{|v_j| : j\in\Z\}\leq\frac{1}{\Delta x^{1/2}}\|v\|, \qquad  \|Dv\| \leq\frac{1}{\Delta x}\|v\|, \quad v \in \ell^2.
    \end{equation}
    
    The above inequalities are straightforward to establish, given that $v \in \ell^2$, implying $|v_j|$ tends to $0$ as $j \xrightarrow[]{} \infty$. Thus, taking the maximum yields a simple proof. Several properties of the difference operators in relation to their $\ell^2$-inner product can be derived. Let $v, w \in \ell^2$. Then
\begin{equation}\label{idenDD}
    \langle v, D_{\pm}w \rangle = -\langle D_{\mp}v,w \rangle, \qquad \langle v, Dw\rangle = - \langle Dv, w \rangle.
\end{equation}
 Furthermore, employing the product formulas \eqref{prod1} and \eqref{prod2} along with the properties \eqref{idenDD}, we get the following identities:
\begin{align}
    \label{Dz_1z_2} \left\langle D(vw), w \right\rangle &= \frac{\Delta x}{2} \left\langle D_+vD(w), w \right\rangle + \frac{1}{2} \left\langle S^-wDv, w \right\rangle, \\
    \label{D3z_1z_2} D_+D_- (vw) &= D_-vD_+w + S^-vD_+D_-w + D_+vD_+w + wD_+D_-v.
\end{align}
\subsection{Discretization of fractional Laplacian}
In the pursuit of a discretization of the fractional Laplacian, we consider the definition of fractional Laplacian. Let $u\in\mathcal{S(\R)}$, where $\mathcal{S}(\R)$ denotes the Schwartz space. Then
\begin{align}
      \nonumber-(-\Delta)^{\alpha/2}[u](x) &= c_\alpha \text{P.V.}\int_{\mathbb{R}} \frac{u(y)-u(x)}{|y-x|^{1+\alpha}}\,dy\\
      \label{FracLapequ}&=\frac{1}{2}c_\alpha \int_{\mathbb{R}} \frac{u(x+y)-2u(x)+u(x-y)}{|y|^{1+\alpha}}\,dy.
\end{align}
The above equivalence follows by the standard change of variable formula. In fact, \eqref{FracLapequ} is quite useful to remove the singularity of the integral. For any smooth function $u$, a second order Taylor series expansion yields
  \begin{equation}\label{errorfor}
      \frac{u(x+y)-2u(x)+u(x-y)}{|y|^{1+\alpha}}\leq \frac{\|\partial_x^2u\|_{L^\infty (\R)}}{|y|^{\alpha-1}},
  \end{equation}
which is integrable near origin.
  
Our approach to discretize the fractional Laplacian begins with the consideration of even indices. More precisely, 
  \begin{equation*}
      \begin{split}
          (-(-\Delta)^{\alpha/2}u)_j &=  c_\alpha \text{P.V.}\int_{\mathbb{R}} \frac{u(y)-u_j}{|y-x_j|^{1+\alpha}}\,dy\\
          &= c_\alpha \sum_{k=\text{even}} \int_{x_k}^{x_{k+2}} \frac{u(y)-u_j}{|y-x_j|^{1+\alpha}}\,dy,
      \end{split}
  \end{equation*}
where $u(x_j):=u_j$. Employing the midpoint formula on the integrals, we arrive at the discrete approximation:
 \begin{equation*}
     (-(-\Delta)^{\alpha/2}u)_j \approx  c_\alpha \sum_{k=\text{odd}} 2\Delta x \frac{u_k-u_j}{|x_k-x_j|^{1+\alpha}}
 \end{equation*}
which can be rewritten as
  \begin{equation*}
     (-(-\Delta)^{\alpha/2}u)_j \approx \frac{2c_\alpha}{\Delta x^\alpha}  \sum_{k=\text{odd}}  \frac{u_k-u_j}{|k-j|^{1+\alpha}}.
 \end{equation*}
Similarly, by considering the odd indices, we end up with
\begin{equation*}
     (-(-\Delta)^{\alpha/2}u)_j \approx \frac{2c_\alpha}{\Delta x^\alpha}  \sum_{k=\text{even}}  \frac{u_k-u_j}{|k-j|^{1+\alpha}}.
\end{equation*}
Consequently, the combining of the above results leads to the definition of the discrete fractional Laplacian, denoted as $\mathbb{D}^{\alpha}$, expressed as follows:
   \begin{align}\label{DisLap}
   \mathbb{D}^{\alpha}(u)_j &= \frac{c_\alpha}{\Delta x^\alpha}  \sum_{k\neq j}  \frac{u_k-u_j}{|k-j|^{1+\alpha}} (1-(-1)^{j-k}).
  \end{align}
Another approach to define the discrete fractional Laplacian can be incorporating the definition given by Nezza et al. \cite[Lemma 3.2]{di2012hitchhikers} in which the singular integral is represented by a weighted second order differential quotient.
\begin{lemma}\label{equi_frac}
    Let $\alpha\in[1,2)$ and $\mathbb{D}^{\alpha}$ be the discrete fractional Laplacian defined by \eqref{DisLap}. Then, for any $u\in\mathcal{S}(\R)$,
    \begin{equation}\label{Nonsigudef}
        \mathbb{D}^{\alpha}(u)_j  = \frac{c_\alpha}{2}  \sum_{i}\int_{x_{2i}}^{x_{2i+2}}  \frac{u(x_j+x_{2i+1})+u(x_j-x_{2i+1})-2u(x_{j})}{|x_{2i+1}|^{1+\alpha}}\,dx.
    \end{equation}
\end{lemma}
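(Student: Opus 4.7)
The proof is essentially a careful bookkeeping computation. The plan is to massage the defining sum \eqref{DisLap} into the integral form \eqref{Nonsigudef} in three moves: collapse to odd indices via the parity factor, symmetrize in $\pm m$ to expose the second-order centered difference, then recognize the result as the value of the (piecewise constant on $[x_{2i},x_{2i+2}]$) integrand in \eqref{Nonsigudef}.

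First I would unpack the parity factor in \eqref{DisLap}. Since $1-(-1)^{j-k}$ equals $2$ when $j-k$ is odd and $0$ otherwise, setting $m=k-j$ reduces the definition to a sum over odd integers only:
\[
\mathbb{D}^{\alpha}(u)_j \;=\; \frac{2c_\alpha}{\Delta x^\alpha}\sum_{\substack{m\in\Z\\m\text{ odd}}}\frac{u_{j+m}-u_j}{|m|^{1+\alpha}}.
\]
Because $u\in\mathcal{S}(\R)$ enjoys rapid decay and $|u_{j+m}-u_j|\leq 2\|u\|_{L^\infty}$, the series converges absolutely and may be rearranged. Pairing $m$ with $-m$ and writing $m=2i+1$ for $i\geq0$ produces the symmetric form
\[
\mathbb{D}^{\alpha}(u)_j \;=\; \frac{2c_\alpha}{\Delta x^\alpha}\sum_{i=0}^{\infty}\frac{u_{j+2i+1}+u_{j-2i-1}-2u_j}{(2i+1)^{1+\alpha}}.
\]
This is where the Nezza et al.\ centered-difference structure appears. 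Using $x_{2i+1}=(2i+1)\Delta x$ to rewrite $(2i+1)^{1+\alpha}=|x_{2i+1}|^{1+\alpha}/\Delta x^{1+\alpha}$ and absorbing one power of $\Delta x$ yields
\[
\mathbb{D}^{\alpha}(u)_j \;=\; 2c_\alpha\,\Delta x\sum_{i=0}^{\infty}\frac{u(x_j+x_{2i+1})+u(x_j-x_{2i+1})-2u(x_j)}{|x_{2i+1}|^{1+\alpha}}.
\]

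Next I would analyze the right-hand side of \eqref{Nonsigudef}. The integrand is \emph{constant} in the dummy variable (it only depends on $x_{2i+1}$), so each integral over $[x_{2i},x_{2i+2}]$ simply multiplies the integrand by the interval length $2\Delta x$. Interpreting the sum $\sum_i$ as indexed by $i\in\Z$, the involution $i\mapsto -i-1$ sends $x_{2i+1}$ to $-x_{2i+1}$, leaves $|x_{2i+1}|^{1+\alpha}$ invariant, and swaps $u(x_j+x_{2i+1})\leftrightarrow u(x_j-x_{2i+1})$; hence the $i<0$ terms exactly reproduce the $i\geq 0$ terms and the sum over $\Z$ equals twice the sum over $i\geq 0$. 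Combining the $\tfrac{c_\alpha}{2}$ prefactor, the $2\Delta x$ from each integral, and this factor of two recovers precisely $2c_\alpha\,\Delta x\sum_{i\geq 0}(\cdots)$, matching the displayed expression above.

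The only real subtlety is accounting correctly for this factor-of-two redundancy (which is exactly what makes the prefactor $\tfrac{c_\alpha}{2}$ in \eqref{Nonsigudef} match the $c_\alpha$ in \eqref{DisLap}), mirroring the symmetrization step in passing from the principal-value form to \eqref{FracLapequ} for the continuous operator. Absolute convergence, and hence the legitimacy of the rearrangements, is guaranteed by the Schwartz decay of $u$ together with the Taylor bound \eqref{errorfor}; there is no further analytic obstacle.
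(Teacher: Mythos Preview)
Your proof is correct and follows essentially the same route as the paper: change variable to $m=k-j$, symmetrize in $\pm m$ to produce the centered second difference over odd indices, rewrite $(2i+1)^{1+\alpha}$ in terms of $|x_{2i+1}|^{1+\alpha}$, and then identify the resulting sum with the (constant-integrand) integral form. The only cosmetic difference is that the paper keeps the summation index running over all of $\Z$ throughout (so that the $i\mapsto -i-1$ redundancy is implicit), whereas you reduce to $i\ge 0$ first and then explicitly double back; your extra remarks on absolute convergence are a nice touch the paper omits.
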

\begin{proof}
We consider the equation \eqref{DisLap} and by the change of variable $k-j=i$, it results into
\begin{equation*}
    \begin{split}
        2\sum_{i\neq 0} \frac{u_{j+i}-u_j}{|i|^{1+\alpha}}(1-(-1)^{i}) &= \sum_{i\neq 0} \frac{u_{j+i}-u_j}{|i|^{1+\alpha}}(1-(-1)^{i})+\sum_{i\neq 0} \frac{u_{j+i}-u_j}{|i|^{1+\alpha}}(1-(-1)^{i})\\
        &= \sum_{i\neq 0} \frac{u_{j+i}-u_j}{|i|^{1+\alpha}}(1-(-1)^{i}) + \sum_{i\neq 0} \frac{u_{j-i}-u_j}{|i|^{1+\alpha}}(1-(-1)^{i})\\
        &= \sum_{i} \frac{u_{j+i}+u_{j-i}-2u_j}{|i|^{1+\alpha}}(1-(-1)^{i}),
    \end{split}
\end{equation*}
as only odd $i$ survives, thus
\begin{align*}
    \nonumber 2\sum_{i\neq 0} \frac{u_{j+i}-u_j}{|i|^{1+\alpha}}(1-(-1)^{i}) &= 2\Delta x^{1+\alpha}\sum_{i}\frac{u(x_j+x_{2i+1})+u(x_j-x_{2i+1})-2u(x_{j})}{|x_{2i+1}|^{1+\alpha}}\\
    &= \Delta x^{\alpha}\sum_{i}\int_{x_{2i}}^{x_{2i+2}}\frac{u(x_j+x_{2i+1})+u(x_j-x_{2i+1})-2u(x_{j})}{|x_{2i+1}|^{1+\alpha}}\,dx.
\end{align*}
Hence the result follows.
\end{proof}
The Lemma \ref{equi_frac} will help us to show the relation between continuous and discrete fractional Sobolev norms. Let us define the $h^2$-norm for the given grid function $u^n$ by 
    \begin{equation*}
        \|u^n\|_{h^2} = \|u^n\| + \|D_+u^{n}\|+\|D_+D_-u^{n}\|.
    \end{equation*} 

There exists an alternative definition of the fractional Laplacian for a function $u\in\mathcal{S}(\R)$, provided by Yang et al. \cite[Lemma 1]{yang2010numerical}, expressed by the following equation:
\begin{equation}\label{fracLfrac}
    -(-\Delta)^{\alpha/2}u(x) = \frac{\partial^\alpha}{\partial|x|^\alpha}u(x) =  -\frac{{}_{-\infty}D_{x}^\alpha u(x) + {}_{x}D_{\infty}^\alpha u(x)}{2\cos\left(\frac{\alpha\pi}{2}\right)}, \quad 1<\alpha<2,
\end{equation}
where ${}_{-\infty}D_{x}^\alpha$ and ${}_{x}D_{\infty}^\alpha$ are defined as the left- and right-side Riemann–Liouville derivatives \cite{yang2010numerical}. Following Xu et al. \cite{xu2014discontinuous} and Ervin et al. \cite{ervin2006variational}, we introduce the following definitions of norms, aiding in the formulation of a norm on the fractional Laplacian \eqref{fracLfrac}.

\begin{definition}
    We define the semi-norms 
    \begin{align}\label{SeminormfracL}
        \abs{u}_{J_L^\alpha{(\mathbb R)}} = \norm{{}_{-\infty}D_x^\alpha u}_{L^2(\mathbb{R})}, 
        \qquad \abs{u}_{J_R^\alpha(\mathbb{R})} = \norm{{}_xD_{\infty}^\alpha u}_{L^2(\mathbb{R})},
    \end{align}
    and the norms
    \begin{align}\label{NormfracL}
       \norm{u}_{J_L^\alpha{(\mathbb R)}}  = \abs{u}_{J_L^\alpha{(\mathbb{R})}} + \norm{u}_{L^2(\mathbb{R})}, 
       \qquad \norm{u}_{J_R^\alpha(\mathbb{R})}  = \abs{u}_{J_R^\alpha(\mathbb{R})} + \norm{u}_{L^2(\mathbb{R})},
    \end{align}
    and let $J_L^\alpha{(\mathbb{R})}$ and $J_R^\alpha(\mathbb{R})$ denote the closure of $C_c^{\infty}(\mathbb{R})$ with respect to $\norm{\cdot}_{J_L^\alpha{(\mathbb{R})}}$ and $\norm{\cdot}_{J_R^\alpha(\mathbb{R})}$ respectively.
\end{definition}

It is evident from the definition \eqref{fracLfrac} that
\begin{equation}\label{fracnorm}
    \norm{-(-\Delta)^{\alpha/2}u(x)}_{L^2(\mathbb{R})} = C\left(\norm{{}_{-\infty}D_x^\alpha u}_{L^2(\mathbb{R})}+ \norm{{}_xD_\infty^\alpha u}_{L^2(\mathbb{R})}\right),\quad 1<\alpha<2,
\end{equation}
where $C$ is a constant independent of $u$. We wish to establish a connection between fractional Sobolev spaces and the aforementioned fractional derivative spaces. According to the result by Ervin et al. \cite[Theorem 2.1]{ervin2006variational}, fractional derivative spaces $J_L^\alpha{\mathbb{R}}$ and $J_R^\alpha(\mathbb{R})$ with respect to norms defined by \eqref{NormfracL}, are equal to the fractional Sobolev space $H^\alpha(\mathbb{R})$. 
Therefore, using the norm on the fractional Laplacian \eqref{fracnorm}, we conclude that the function $u\in L^2(\R)$ with $-(-\Delta)^{\alpha/2}\partial_xu\in L^2(\mathbb{R})$ implies $ u\in H^{1+\alpha}(\mathbb{R})$.
    

Subsequently, we define the norm for discrete fractional derivative space $h^{1+\alpha}$ as 
\begin{equation*}
        \|u^n\|_{h^{1+\alpha}} = \|u^n\| + \|D_+u^{n}\|+\|D_+D_-u^{n}\| + \|\mathbb{D}^{\alpha}Du^n\|.
    \end{equation*}   

Now, we present a lemma establishing a fundamental connection between continuous and discrete derivative norms. Given the frequent use of this lemma, we include a proof for the sake of completeness.

\begin{lemma}\label{D_C}
    Let $u\in H^{1+\alpha}(\R)$, $\alpha\in [1,2)$. Then there is a constant $C$ such that the discrete evaluation $\{u(x_j)\}_j$ satisfies
    \begin{equation}
        \|u_{\Delta x} \|_{h^{1+\alpha}} \leq C \|u\|_{H^{1+\alpha}(\R)}.
    \end{equation}
\end{lemma}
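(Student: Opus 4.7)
The plan is to bound each of the four discrete quantities that comprise $\|u_{\Delta x}\|_{h^{1+\alpha}}$ separately by $\|u\|_{H^{1+\alpha}(\R)}$. Since $\alpha\in[1,2)$ gives the embedding $H^{1+\alpha}(\R)\hookrightarrow H^2(\R)\cap C^1(\R)$, pointwise sampling on the grid is well defined and $u,u',u''\in L^2(\R)$.

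The first three contributions are handled by elementary integral representations together with Cauchy--Schwarz. Writing $u(x_j)=u(y)-\int_y^{x_j}u'(z)\,dz$ for $y\in[x_j,x_{j+1}]$, averaging in $y$, and summing yields $\|u_{\Delta x}\|^2\le 2\|u\|_{L^2}^2+2\Delta x^2\|u'\|_{L^2}^2$. The identity $D_+u(x_j)=\Delta x^{-1}\int_{x_j}^{x_{j+1}}u'(y)\,dy$ gives $\|D_+u_{\Delta x}\|^2\le\|u'\|_{L^2}^2$ at once. Taylor's formula with integral remainder,
\[
D_+D_-u(x_j)=\frac{1}{\Delta x^2}\int_{-\Delta x}^{\Delta x}(\Delta x-|s|)\,u''(x_j+s)\,ds,
\]
combined with the observation that each $y\in\R$ lies in at most three of the intervals $[x_j-\Delta x,\,x_j+\Delta x]$, produces $\|D_+D_-u_{\Delta x}\|^2\le C\|u''\|_{L^2}^2$. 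Each of these three pieces is dominated by $\|u\|_{H^2}^2\le\|u\|_{H^{1+\alpha}}^2$.

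The substantive step is the bound on $\|\mathbb{D}^\alpha Du_{\Delta x}\|$, which I would attack via Fourier analysis on the grid. Both $D$ and $\mathbb{D}^\alpha$ are translation-invariant convolutions on $\ell^2(\Delta x\,\Z)$ and hence Fourier multipliers; on the Brillouin zone $\xi\in[-\pi/\Delta x,\pi/\Delta x]$ their symbols are
\[
M_D(\xi\Delta x)=\frac{i\sin(\xi\Delta x)}{\Delta x},\qquad M_\alpha(\xi\Delta x)=-\frac{8c_\alpha}{\Delta x^\alpha}\sum_{\substack{m>0\\m\ \mathrm{odd}}}\frac{\sin^2(m\xi\Delta x/2)}{m^{1+\alpha}}.
\]
Splitting the series for $M_\alpha$ at $m\sim 1/|\xi\Delta x|$ and using $\sin^2(m\xi\Delta x/2)\le\min((m\xi\Delta x/2)^2,1)$ yields the uniform estimate $|M_\alpha(\xi\Delta x)|\le C|\xi|^\alpha$, while $|M_D(\xi\Delta x)|\le|\xi|$ is immediate. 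Parseval on the torus combined with the Poisson summation identity $\hat u_{\Delta x}(\xi)=\sum_k\hat u(\xi+2\pi k/\Delta x)$ then reduces matters to estimating $\int_{-\pi/\Delta x}^{\pi/\Delta x}|\xi|^{2(1+\alpha)}|\hat u_{\Delta x}(\xi)|^2\,d\xi$. The $k=0$ alias contributes $\|u\|_{H^{1+\alpha}}^2$ directly, and the tail $k\neq 0$ is controlled via a weighted Cauchy--Schwarz with weight $(1+|\xi+2\pi k/\Delta x|^2)^{-s/2}$ for some $s\in(1/2,1+\alpha)$, the lower bound $|\xi+2\pi k/\Delta x|\ge\pi/\Delta x$ absorbing the $\Delta x$-dependent factor picked up along the way.

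The main obstacle is the symbol estimate $|M_\alpha(\theta)|\le C(|\theta|/\Delta x)^\alpha$ uniformly on $[-\pi,\pi]$. The restriction to odd indices furnished by Lemma~\ref{equi_frac} is essential here: it simultaneously ensures absolute convergence of the defining series for $\alpha\in[1,2)$ and forces $M_\alpha$ to vanish at $\theta=0$ with the correct order $\alpha$, matching the continuous Fourier symbol $|\xi|^\alpha$ of $(-\Delta)^{\alpha/2}$. Once this symbol bound is in hand, the subsequent aliasing argument is routine.
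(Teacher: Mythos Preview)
Your argument is correct, but the route is genuinely different from the paper's. The paper stays entirely in physical space: after noting that $\mathbb D^\alpha$ commutes with difference operators it replaces $Du$ by $\partial_x u$, invokes the second-difference representation of Lemma~\ref{equi_frac} to write $\mathbb D^\alpha(\partial_x u)_j$ as a midpoint-rule sum, bounds that sum pointwise by the continuous singular integral $\tfrac{c_\alpha}{2}\int_\R\frac{\partial_x u(x_j+y)+\partial_x u(x_j-y)-2\partial_x u(x_j)}{|y|^{1+\alpha}}\,dy=(-\Delta)^{\alpha/2}\partial_x u(x_j)$, and then applies a sampling bound together with the identification \eqref{fracnorm} to reach $\|u\|_{H^{1+\alpha}}$. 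Your approach instead passes through Fourier space: you obtain the uniform multiplier bound $|M_\alpha(\xi\Delta x)|\le C|\xi|^\alpha$ by splitting the odd-index series at $m\sim 1/|\xi\Delta x|$, combine it with $|M_D|\le|\xi|$, and then close via Poisson summation and an aliasing estimate.

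The paper's proof is shorter and more elementary once the quadrature representation \eqref{Nonsigudef} is available, though the step ``midpoint sum $\le$ integral'' is stated rather informally there. Your Fourier argument is heavier but fully quantitative: the symbol estimate $|M_\alpha|\le C|\xi|^\alpha$ is of independent interest (it is exactly what one would want for a consistency or rate-of-convergence analysis), and the aliasing control makes the dependence on $\Delta x$ completely explicit. Both approaches lean on the odd-index structure supplied by Lemma~\ref{equi_frac} at the same crucial point---in the paper to get a nonsingular quadrature, in your proof to get the correct $|\theta|^\alpha$ vanishing of the symbol at the origin.
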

    \begin{proof}
        We begin by observing that the discrete fractional Laplacian $\mathbb D^\alpha$ commutes with other difference operators and derivatives. By the definition of $\ell^2$-norm and $\mathbb D^\alpha$, we have 
        \begin{align*}
            \norm{\mathbb D^\alpha Du}^2 &\leq C \norm{\mathbb D^\alpha \partial_x u}^2\\
            &= C\Delta x \sum_j \left(\mathbb D^\alpha (\partial_x u)_j\right)^2 \\
            & = C\Delta x \sum_j \left(\frac{c_\alpha}{2}\sum_{i}\int_{x_{2i}}^{x_{2i+2}}\frac{\partial_x u(x_j+x_{2i+1})+\partial_x u(x_j-x_{2i+1})-2\partial_x u(x_{j})}{|x_{2i+1}|^{1+\alpha}}\,dx\right)^2 \\
            & \leq C\Delta x \sum_j \left(\frac{c_\alpha}{2}\int_{\R}\frac{\partial_x u(x_j+y)+\partial_x u(x_j-y)-2\partial_x u(x_{j})}{|y|^{1+\alpha}}\,dy\right)^2 \\
            &\leq C \norm{u}_{H^{1+\alpha}(\R)}.
        \end{align*}
       We have used midpoint formula in the above calculation. Hence the result follows.
    \end{proof}
Please note that from \cite[Lemma 2.1]{dutta2016convergence}, we have $\norm{u_{\Delta x}}_{h^2} \leq C \norm{u}_{H^2(\R)}$. We establish certain properties of the discrete fractional Laplacian \eqref{DisLap} in the following lemma, mirroring the discrete counterparts of properties of the fractional Laplacian introduced in \cite{dwivedi2023stability}.

\begin{lemma}\label{P_fracLap}
The discrete fractional Laplacian $\mathbb{D}^{\alpha}$, $\alpha\in[1,2)$ defined by \eqref{DisLap}, exhibits linearity and possesses the following noteworthy properties for any pair of grid functions $u,v\in \ell^2$:
\begin{enumerate}[label=(\roman*)]
    \item \label{itm2.1}(Symmetry) The discrete fractional Laplacian exhibits symmetry:
    \[  \langle\mathbb{D}^{\alpha} u, v\rangle = \langle u,\mathbb{D}^{\alpha} v\rangle.\]
    \item \label{itm2.2}(Translation invariant and Skew-symmetry) The discrete fractional Laplacian commutes with the difference operator:
    \[ \langle \mathbb{D}^{\alpha}Du,v\rangle = -\langle u,\mathbb{D}^{\alpha}Dv\rangle = \langle D\mathbb{D}^{\alpha}u,v\rangle.\]
    \item \label{itm2.3}The discrete fractional Laplacian further satisfies:
    \[  \langle \mathbb{D}^{\alpha}Du,u\rangle = 0.\]
\end{enumerate}
\end{lemma}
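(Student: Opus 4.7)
The plan is to verify the three claims in order, leveraging the explicit convolution form \eqref{DisLap} together with the symmetry of its kernel. For part~\ref{itm2.1}, the key observation is that the weights $w_{jk} := (1-(-1)^{j-k})/|k-j|^{1+\alpha}$ satisfy $w_{jk} = w_{kj}$, so $\mathbb{D}^{\alpha}$ acts as a symmetric convolution. Writing
\[
\langle \mathbb{D}^{\alpha} u, v\rangle = c_\alpha\, \Delta x^{1-\alpha}\sum_{j}\sum_{k\neq j} (u_k - u_j)\, v_j\, w_{jk},
\]
I first verify that the double sum converges absolutely: the kernel $\{w_{0m}\}_{m\in\Z}$ is $\ell^1$-summable because $1+\alpha>1$, so Young's inequality combined with Cauchy--Schwarz yields absolute convergence for $u,v\in\ell^2$, and Fubini permits exchanging the order of summation. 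Splitting the summand as $u_k v_j w_{jk} - u_j v_j w_{jk}$ and swapping $j \leftrightarrow k$ in the first piece (using $w_{kj}=w_{jk}$) recasts the expression as $c_\alpha \Delta x^{1-\alpha}\sum_j u_j \sum_{k\neq j}(v_k - v_j) w_{jk} = \langle u, \mathbb{D}^{\alpha} v\rangle$. Linearity of $\mathbb{D}^{\alpha}$ is immediate from \eqref{DisLap}.

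For part~\ref{itm2.2}, I first establish translation invariance: the substitution $k \mapsto k\pm 1$ in \eqref{DisLap} shows $\mathbb{D}^{\alpha} S^{\pm} u = S^{\pm} \mathbb{D}^{\alpha} u$, hence $\mathbb{D}^{\alpha}$ commutes with $D_{\pm}$ and with $D$. The identity $\langle \mathbb{D}^{\alpha} Du,v\rangle = \langle D\mathbb{D}^{\alpha} u,v\rangle$ then follows at once. For the middle equality I combine the symmetry from part~\ref{itm2.1} with the skew-symmetry $\langle Dw,z\rangle = -\langle w, Dz\rangle$ from \eqref{idenDD}:
\[
\langle \mathbb{D}^{\alpha} Du,v\rangle = \langle Du,\mathbb{D}^{\alpha} v\rangle = -\langle u,D\mathbb{D}^{\alpha} v\rangle = -\langle u,\mathbb{D}^{\alpha} Dv\rangle.
\]

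Finally, part~\ref{itm2.3} is obtained by setting $v = u$ in the identity just established: symmetry of the real inner product gives $\langle \mathbb{D}^{\alpha} Du,u\rangle = -\langle u,\mathbb{D}^{\alpha} Du\rangle = -\langle \mathbb{D}^{\alpha} Du,u\rangle$, which forces the quantity to vanish. The only step demanding genuine care is the absolute-convergence justification for the index swap in~\ref{itm2.1}; once that is in place, everything else reduces to bookkeeping with the symmetric kernel and the product/skew-symmetry identities already collected in Section~\ref{sec2}.
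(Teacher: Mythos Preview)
Your argument is correct. Parts~\ref{itm2.1} and~\ref{itm2.3} match the paper's proof almost verbatim (index swap using the kernel symmetry $w_{jk}=w_{kj}$, and then specializing $v=u$); you add a useful justification of absolute convergence via the $\ell^1$-summability of the kernel, which the paper leaves implicit.

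The only genuine difference is in part~\ref{itm2.2}. The paper proceeds by a direct, hands-on computation: it expands $Du_k - Du_j = \frac{1}{2\Delta x}\bigl((u_{k+1}-u_{k-1})-(u_{j+1}-u_{j-1})\bigr)$ inside the double sum and then performs simultaneous shifts in $j$ and $k$ to transfer the differences onto $v$, arriving at $-\langle u,\mathbb{D}^{\alpha} Dv\rangle$ in one stroke. Your route is more structural: you first isolate the commutation $\mathbb{D}^{\alpha} S^{\pm} = S^{\pm}\mathbb{D}^{\alpha}$ (hence $\mathbb{D}^{\alpha} D = D\mathbb{D}^{\alpha}$) via a single substitution $k\mapsto k\pm 1$, and then obtain the skew-symmetry by chaining part~\ref{itm2.1} with the already-established identity $\langle Dw,z\rangle = -\langle w,Dz\rangle$ from~\eqref{idenDD}. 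Your version is cleaner and makes the two ingredients (translation invariance of the convolution kernel; skew-symmetry of $D$) visible as separate facts, whereas the paper's computation fuses them. Either way the content is the same and both are valid.
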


\begin{proof} Let $u,v\in\ell^2$. Then the definition of $\ell^2$-inner product and discrete fractional Laplacian \eqref{DisLap} provide
\begin{align*}
    \langle\mathbb{D}^{\alpha} u, v\rangle &= \Delta x \sum_j \frac{c_\alpha}{\Delta x^\alpha}\sum_{k\neq j}  \frac{v_ju_k-v_ju_j}{|k-j|^{1+\alpha}} (1-(-1)^{j-k})\\
    & = \Delta x \sum_j \frac{c_\alpha}{\Delta x^\alpha}\sum_{k\neq j}  \frac{v_k-v_j}{|k-j|^{1+\alpha}} (1-(-1)^{j-k})u_j\\
    & = \langle u,\mathbb{D}^{\alpha} v\rangle,    
\end{align*}
where we have used the change of variable. Thus \ref{itm2.1} follows. Similarly, we have
\begin{equation*}
     \langle \mathbb{D}^{\alpha}(Du),v\rangle = \frac{c_\alpha}{\Delta x^\alpha}  \sum_{j}\sum_{k\neq j}  \frac{Du_k-Du_j}{|k-j|^{1+\alpha}} (1-(-1)^{j-k})v_j.
\end{equation*}
This can be presented as 
\begin{equation*}
\begin{split}
       \langle\mathbb{D}^{\alpha}(Du),v\rangle &= \Delta x \sum_j \frac{c_\alpha}{\Delta x^\alpha}\sum_{k\neq j}  \frac{(Du_k-Du_j)v_j}{|k-j|^{1+\alpha}} (1-(-1)^{j-k})\\
       &=  \Delta x \sum_j \frac{c_\alpha}{\Delta x^\alpha}\sum_{k\neq j} \frac{v_j(u_{k+1}-u_{k-1})-v_j(u_{j+1}-u_{j-1})}{2\Delta x|k-j|^{1+\alpha}} (1-(-1)^{j-k}).\\
\end{split}
\end{equation*}
Suitable change of variables in $j$ and $k$ implies
\begin{equation*}
\begin{split}
    \langle\mathbb{D}^{\alpha}(Du),v\rangle &= \Delta x \sum_j \frac{c_\alpha}{\Delta x^\alpha}\sum_{k\neq j}  \frac{u_k(v_{j-1}-v_{j+1})-u_j(v_{j-1}-v_{j+1})}{2\Delta x|k-j|^{1+\alpha}} (1-(-1)^{j-k})\\
    &= -\Delta x \sum_j \frac{c_\alpha}{\Delta x^\alpha}\sum_{k\neq j} \frac{(Dv_k-Dv_j)u_j}{|k-j|^{1+\alpha}} (1-(-1)^{j-k})\\
    &= -\langle\mathbb{D}^{\alpha}(u),Dv\rangle\\
    &= \langle D\mathbb{D}^{\alpha}(u),v\rangle, 
\end{split}
\end{equation*}
where we have used the property of symmetric difference operator over $\ell^2$-inner product, hence \ref{itm2.2} follows. Property \ref{itm2.3} follows from \ref{itm2.1} and \ref{itm2.2} by choosing $v=u$ in $\ref{itm2.2}$.
\end{proof}
In the following lemma, we show that the discretization of fractional Laplacian is consistent. 
\begin{lemma}\label{consit}
Let $\phi\in C_c^4(\mathbb{R})$. Define a piece-wise constant function $d$ by 
\[  d(x) = d_j = \mathbb{D}^{\alpha}(\phi)(x_j) \text{ for } x\in[x_j,x_{j+1}), \quad j\in\mathbb{Z}.   \]
Then
\begin{equation*}
     \lim_{\Delta x\xrightarrow[]{}0}\|(-(-\Delta)^{\alpha/2})(\phi) - d\|_{L^2(\mathbb{R})} = 0.
\end{equation*}
\end{lemma}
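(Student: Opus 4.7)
The plan is to use Lemma \ref{equi_frac}, which recasts $\mathbb{D}^\alpha(\phi)(x_j)$ as a midpoint-rule Riemann sum approximating the integral defining $-(-\Delta)^{\alpha/2}\phi(x_j)$ in \eqref{FracLapequ}. Writing $F(x,y) := \frac{\phi(x+y)+\phi(x-y)-2\phi(x)}{|y|^{1+\alpha}}$, the pointwise error at a node reads
\[
e_j := \mathbb{D}^\alpha(\phi)(x_j) - (-(-\Delta)^{\alpha/2})\phi(x_j) = \frac{c_\alpha}{2}\sum_{i\in\Z} \Bigl[\,2\Delta x\, F(x_j, x_{2i+1}) - \int_{x_{2i}}^{x_{2i+2}} F(x_j, y)\,dy\,\Bigr],
\]
and I would analyze these local quadrature errors interval by interval.

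For $|i|\geq 1$ the integrand $F(x_j,\cdot)$ is smooth on $[x_{2i}, x_{2i+2}]$, and the classical midpoint rule bound controls the local error by $\tfrac{(2\Delta x)^3}{24}\sup|\partial_y^2 F(x_j,\cdot)|$. Using $\phi\in C_c^4$ and the even Taylor expansion $\phi(x_j+y)+\phi(x_j-y)-2\phi(x_j)=\phi''(x_j) y^2+O(y^4)$, a direct computation exploiting the cancellation $g(y)=O(y^2)$, $g'(y)=O(y)$, $g''(y)=O(1)$ near $0$ yields $|\partial_y^2 F(x_j,y)|\leq C\|\phi\|_{C^4}|y|^{-1-\alpha}$ for $|y|$ small, with faster decay for large $|y|$. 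Summing over $i$ (with $|y|\sim 2\Delta x|i|$) produces a total contribution of order $\Delta x^{2-\alpha}$. For the two intervals $[-2\Delta x,0]$ and $[0,2\Delta x]$ neighbouring the singularity, I would estimate both the integral and the midpoint value directly using $F(x_j,y)=\phi''(x_j)|y|^{1-\alpha}+O(|y|^{3-\alpha})\|\phi^{(4)}\|_\infty$; each piece is itself of order $\Delta x^{2-\alpha}$, and so is their difference. Combining, one obtains the uniform pointwise bound $|e_j|\leq C\|\phi\|_{C^4}\,\Delta x^{2-\alpha}$.

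To pass from this pointwise estimate to $L^2$, I would split
\[
\|d-(-(-\Delta)^{\alpha/2})\phi\|_{L^2(\R)}^2 \leq 2\Delta x\sum_j|e_j|^2+2\sum_j\int_{x_j}^{x_{j+1}}\bigl|(-(-\Delta)^{\alpha/2})\phi(x)-(-(-\Delta)^{\alpha/2})\phi(x_j)\bigr|^2\,dx.
\]
For the first sum I would partition at a threshold $|x_j|\leq M$: the central part yields $CM\Delta x^{2(2-\alpha)}$ via the uniform bound, while for $|x_j|>M$ the compact support of $\phi$ forces both $\mathbb{D}^\alpha(\phi)(x_j)$ and $-(-\Delta)^{\alpha/2}\phi(x_j)$ to decay like $|x_j|^{-1-\alpha}$, giving a tail of order $M^{-1-2\alpha}$; choosing $M$ large first and then $\Delta x$ small renders the sum arbitrarily small. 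The second term is $O(\Delta x^2\|\partial_x(-(-\Delta)^{\alpha/2})\phi\|_{L^2}^2)$ by Cauchy-Schwarz, and the norm is finite since $\phi\in H^{2+\alpha}(\R)$.

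The main obstacle is precisely the singular behaviour of $F(x_j,\cdot)$ at $y=0$: the $|y|^{-1-\alpha}$ blow-up of $\partial_y^2 F$ caps the quadrature rate at $\Delta x^{2-\alpha}$ on the smooth intervals, and the two intervals $[-2\Delta x,0]$, $[0,2\Delta x]$ adjacent to the origin must be treated by direct Taylor expansion because the midpoint rule error formula does not apply there. The convergence rate degrades as $\alpha\nearrow 2$ but remains strictly positive throughout $[1,2)$, which is all that is needed for the $L^2$ consistency claim.
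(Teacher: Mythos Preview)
Your proposal is correct and follows essentially the same architecture as the paper's proof: both split via the auxiliary piecewise-constant function $\tilde d_j=(-(-\Delta)^{\alpha/2})\phi(x_j)$, bound $\|(-(-\Delta)^{\alpha/2})\phi-\tilde d\|_{L^2}$ by Cauchy--Schwarz against $\|(-(-\Delta)^{\alpha/2})\phi'\|_{L^2}$, control the nodal quadrature errors $|d_j-\tilde d_j|$ through the midpoint-rule remainder, and handle the $\ell^2$ sum by a near/far cutoff in $j$. Your treatment is in fact somewhat more careful than the paper's, since you isolate the two intervals $[-2\Delta x,0]$ and $[0,2\Delta x]$ adjacent to the singularity (where the standard midpoint error bound with $\partial_y^2 F$ is not directly available) and you give an explicit $|x_j|^{-1-\alpha}$ decay estimate for the tail rather than invoking abstract square-summability.
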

\begin{proof}
Let $\Tilde{d}$ be an auxiliary function defined by
\[ \Tilde{d}(x) = \Tilde{d}_j = (-(-\Delta)^{\alpha/2})(\phi)(x_j) \text{ for } x\in[x_j,x_{j+1}), \qquad j\in\mathbb{Z}.\]
By the triangle inequality, we have
\[  \|(-(-\Delta)^{\alpha/2})(\phi) - d\|_{L^2(\mathbb{R})}  \leq \|(-(-\Delta)^{\alpha/2})(\phi) - \Tilde{d}\|_{L^2(\mathbb{R})} + \|\Tilde{d} - d\|_{L^2(\mathbb{R})}.\]
Let us estimate the first term on the right hand side by
\begin{equation*}
    \begin{split}
        \Big\|\left(-(-\Delta)^{\alpha/2}\right)(\phi) - \Tilde{d}\Big\|_{L^2(\mathbb{R})}^2 &= \sum_{j}\int_{x_j}^{x_{j+1}}\left(\left(-(-\Delta)^{\alpha/2}\right)(\phi)(x) - \left(-(-\Delta)^{\alpha/2}\right)(\phi)(x_j)\right)^2\,dx\\
        &= \sum_{j}\int_{x_j}^{x_{j+1}}\left(\int_{x_j}^x \left(\left(-(-\Delta)^{\alpha/2}\right)(\phi)\right)'(\xi)\,d\xi\right)^2\,dx\\
        &= \sum_{j}\int_{x_j}^{x_{j+1}}\left(\int_{x_j}^x 1\cdot\left(\left(-(-\Delta)^{\alpha/2}\right)(\phi')\right)(\xi)\,d\xi\right)^2\,dx\\
        &\leq \sum_{j}\int_{x_j}^{x_{j+1}}\left(\int_{x_j}^{x_{j+1}} \left(\left(-(-\Delta)^{\alpha/2}\right)(\phi')(\xi)\right)^2\,d\xi\right)\,(x-x_j)\,dx\\
        &= \frac{\Delta x^2}{2}\|(-(-\Delta)^{\alpha/2})(\phi')\|^2_{L^2(\mathbb{R})}.
    \end{split}
\end{equation*}
The second term can be estimated by
\begin{equation*}
    \begin{split}
        \|\Tilde{d} - d\|_{L^2(\mathbb{R})}^2 &= \Delta x \sum_j (d_j - \Tilde{d_j})^2\leq \Delta x \sum_{|j|\leq J} (d_j - \Tilde{d_j})^2 + 2\Delta x \sum_{|j|>J} (d_j^2 + \Tilde{d_j^2})\\
        &=: K_1 + K_2.
    \end{split}
\end{equation*}
From the equations \eqref{FracLapequ} and \eqref{Nonsigudef}, we have
\begin{equation*}
    d_j - \Tilde{d_j} = \sum_{i}\left(\int_{x_{2i}}^{x_{2i+2}}\xi(x_j,x_{2i+1})\,dy-\int_{x_{2i}}^{x_{2i+2}}\xi(x_j,y)\,dy\right),
\end{equation*}
where $\xi(x,y) =\frac{c_\alpha}{2} (\phi(x+y)-2\phi(x)+\phi(x-y))/|y|^{1+\alpha}$. 

Employing the midpoint quadrature error formula and bound \eqref{errorfor}, we establish the following estimate:
\begin{equation*}
  \begin{split}
      \left|\int_{x_{2i}}^{x_{2i+2}}\xi(x_j,x_{2i+1})\,dy-\int_{x_{2i}}^{x_{2i+2}}\xi(x_j,y)\,dy\right|&\leq c_\alpha C \Delta x^{4-\alpha} \frac{\|\phi^{(4)}\|_{L^{\infty}}}{|2i+1|^{\alpha-1}},
  \end{split}
\end{equation*}
where $C$ is a constant independent of $\Delta x$ and $j$. Furthermore, since $\phi$ has compact support, the summation over $i$ contains only finite number of terms, say $N_\phi/\Delta x$, independent of $j$. Consequently,
\begin{equation*}
    |d_j-\tilde{d_j}|\leq N_\phi c_\alpha C \Delta x^{3-\alpha} \|\phi^{(4)}\|_{L^{\infty}(\R)}M_\alpha,
\end{equation*}
where $M_\alpha$ is an upper bound for $\sum_{|i|\leq K} \frac{1}{|2i+1|^{\alpha-1}}$ with finite $K$ arising from the support of $\phi$. Thus we have
\begin{equation*}
    K_1 \leq N_\phi M_\alpha c_\alpha C \Delta x^{3-\alpha} \|\phi^{(4)}\|_{L^{\infty}(\R)}.
\end{equation*}
Given the finite-ness of $\sum_{j} d_j^2$ and $\sum_j \tilde{d_j^2}$, a judicious choice of a sufficiently large $J$ ensures a small $K_2$ and subsequently, $\Delta x\to 0$ leads to a small $K_1$. Consequently, $\|d_j-\tilde{d_j}\|_{L^2(\R)}\to0$ as $\Delta x \to 0$. Hence the consistency of discrete fractional Laplacian is established.
\end{proof}

\section{Fully discrete semi-implicit scheme}\label{sec3}
We propose the following Euler implicit temporal discretized finite difference scheme to obtain approximate solutions of the fractional KdV equation \eqref{fkdv}:
\begin{equation}\label{FDscheme}
    u_j^{n+1} = \bar u_j^n -\Delta t \bar u_j^n Du_j^n - \Delta t \mathbb{D}^{\alpha}(Du^{n+1}_j),\qquad n\in\mathbb{N}_0,\hspace{0.1cm}j\in\mathbb{Z}.
\end{equation}
For the initial data, we have 
\begin{equation*}
    u_j^0 = u_0(x_j), \qquad j\in \mathbb{Z}.
\end{equation*}
\begin{remark}
    We must ensure that the above scheme \eqref{FDscheme} is solvable with respect to $u^{n+1}$. Solvability can be achieved by rewriting the scheme \eqref{FDscheme} as follows:
    \begin{equation*}
   (1+ \Delta t \mathbb{D}^{\alpha}D)u_j^{n+1} = \bar u_j^n -\Delta t \bar u_j^n Du_j^n.
\end{equation*}
Taking the inner product with $u^{n+1}$, we get:
\begin{align*}
    \langle (1+ \Delta t \mathbb{D}^{\alpha}D)u^{n+1},u^{n+1}\rangle & = \norm{u^{n+1}}^2 + \Delta t\langle \mathbb{D}^{\alpha}Du^{n+1},u^{n+1}\rangle = \norm{u^{n+1}}^2.
\end{align*}
Hence we obtain
\begin{equation*}
    \norm{u^{n+1}}\leq \norm{(1+ \Delta t \mathbb{D}^{\alpha}D)u^{n+1}} = \norm{\bar u^n -\Delta t \bar u^n Du^n}.
\end{equation*}

\end{remark}
\begin{remark}
In the discrete scheme \eqref{FDscheme}, the discretization of the convective term is similar to Holden et al. \cite{holden2015convergence}. However, the main distinction with \cite{holden2015convergence} being the inclusion of the discretized fractional term. Consequently, in the following analysis wherever the fractional term does not play a role, we refer to the approach described in \cite{holden2015convergence}.
\end{remark}

\begin{remark}
The aforementioned scheme aligns with the operator splitting scheme developed in \cite{dutta2021operator} for the fractional KdV equation \eqref{fkdv} and in \cite{holden1999operator,holden2011operator} for the KdV and generalized KdV equation. The scheme can be decomposed as follows:
    \begin{equation*}
        u_j^{n+1/2} = \bar u_j^n - \frac{\Delta t}{4\Delta x} \left((u_{j+1}^n)^2 - (u_{j-1}^n)^2\right)
    \end{equation*}
utilizing \( \bar{u}^n_jDu_j^n = \frac{1}{2}D(u_j^n)^2 \). It is noteworthy that \(u^{n+1/2}\) solves the Lax-Friedrichs scheme applied to \(u^n\) for the nonlinear part of the KdV equation. Subsequently
    \begin{equation*}
        \frac{u^{n+1}-u^{n+1/2}}{\Delta t} = -\mathbb{D}^{\alpha}(Du^{n+1}),
    \end{equation*}
where \(u^{n+1}\) is the approximation of the implicit scheme for the linear dispersive equation with the fractional Laplacian: \(u_t -(-\Delta)^{\alpha/2}u_x=0\). If we denote these two solutions as \(S_B\) and \(S_D\) respectively, then 
    $$ u^{n+1} = (S_D \circ S_B)u^n $$ solves the implicit scheme \eqref{FDscheme}.
The proofs presented here can be adopted to demonstrate the convergence of the operator splitting method for the fractional KdV equation \eqref{fkdv}.
\end{remark}

We state and proof the stability lemma which is the main ingredient in the convergence analysis:

\begin{lemma}\label{stab}
Let $u^n$ be an approximate solution obtained by the scheme \eqref{FDscheme}. Assume that CFL condition satisfies:
\begin{equation}\label{cfl}
    \lambda\|u^0\|(\frac{1}{3} + \frac{1}{2} \lambda \|u^0\|) < \frac{1-\delta}{2}, \quad \delta \in (0,1),
    \end{equation}
    where $\lambda = \Delta t/\Delta x^{3/2}$. Then 
\begin{equation}\label{stability}
        \|u^{n+1}\|^2 + \Delta x^{3}
           \lambda^2\|\mathbb{D}^{\alpha}(Du^{n+1}_j)\|^2
           + \delta \Delta x^2\|Du^n\|^2\leq\|u^n\|^2.
    \end{equation} 
\end{lemma}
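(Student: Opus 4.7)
My plan is to rewrite the scheme \eqref{FDscheme} as
\[
u^{n+1}+\Delta t\,\mathbb{D}^{\alpha}Du^{n+1} \;=\; \bar u^n-\Delta t\,\bar u^n Du^n
\]
and take the squared $\ell^{2}$-norm of both sides. On the left, the cross term $2\Delta t\langle u^{n+1},\mathbb{D}^{\alpha}Du^{n+1}\rangle$ vanishes by property \ref{itm2.3} of Lemma \ref{P_fracLap}, leaving exactly $\|u^{n+1}\|^{2}+\Delta t^{2}\|\mathbb{D}^{\alpha}Du^{n+1}\|^{2}$; since $\Delta t^{2}=\lambda^{2}\Delta x^{3}$, this already reproduces the first two terms of \eqref{stability}. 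The whole proof then reduces to bounding the RHS by $\|u^{n}\|^{2}-\delta\Delta x^{2}\|Du^{n}\|^{2}$.

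To begin with, I would extract a ``numerical viscosity'' from the averaging $\bar u^{n}$: the pointwise identity $\bar u_{j}^{\,2}+\Delta x^{2}(Du_{j})^{2}=\overline{u^{2}}_{j}$ (a one-line check) sums to
\[
\|\bar u^{n}\|^{2} \;=\; \|u^{n}\|^{2}-\Delta x^{2}\|Du^{n}\|^{2},
\]
which is precisely the dissipation that must appear on the left-hand side of \eqref{stability}. It remains to show
\[
2\Delta t\,|\langle \bar u^{n},\bar u^{n}\,Du^{n}\rangle|+\Delta t^{2}\|\bar u^{n}\,Du^{n}\|^{2} \;\leq\;(1-\delta)\Delta x^{2}\|Du^{n}\|^{2}.
\]

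The main obstacle is the cubic form $\langle\bar u^{n},\bar u^{n}Du^{n}\rangle$: a direct Cauchy--Schwarz estimate combined with the inverse inequality $\|v\|_{\infty}\leq\Delta x^{-1/2}\|v\|$ from \eqref{supnorm} is not sharp enough to close under the stated CFL. The key trick is to derive an \emph{exact} identity by chaining the product rule $2\bar u Du=D(u^{2})$, the skew-symmetry of $D$ in \eqref{idenDD}, the commutation $D\bar v=\overline{Dv}$, the self-adjointness of the average, and the same pointwise identity $\overline{u^{2}}=\bar u^{2}+\Delta x^{2}(Du)^{2}$ used above. Chaining these gives
\[
\langle \bar u^{n},\bar u^{n}\,Du^{n}\rangle
= -\tfrac12\langle\overline{Du^{n}},(u^{n})^{2}\rangle
= -\tfrac12\langle Du^{n},\bar u^{n\,2}\rangle -\tfrac{\Delta x^{2}}{2}\,\Delta x\sum_{j}(Du_{j}^{n})^{3},
\]
and since the first term on the right equals $-\tfrac12\langle \bar u^{n},\bar u^{n}Du^{n}\rangle$, solving for the cubic form yields
\[
\langle \bar u^{n},\bar u^{n}\,Du^{n}\rangle \;=\; -\frac{\Delta x^{2}}{3}\,\Delta x\sum_{j}(Du_{j}^{n})^{3}.
\]
The gain is a free $\Delta x^{2}$ prefactor, exactly what is needed to match the quadratic dissipation.

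The remaining step is routine bookkeeping. Using $\|Du^{n}\|_{\infty}\leq\|u^{n}\|_{\infty}/\Delta x\leq\|u^{n}\|/\Delta x^{3/2}$ and the induction hypothesis $\|u^{n}\|\leq\|u^{0}\|$, the identity above gives
\[
2\Delta t\,|\langle\bar u^{n},\bar u^{n}\,Du^{n}\rangle|\leq \tfrac{2}{3}\lambda\|u^{0}\|\,\Delta x^{2}\|Du^{n}\|^{2},
\]
while $\Delta t^{2}\|\bar u^{n}Du^{n}\|^{2}\leq \|\bar u^{n}\|_{\infty}^{2}\Delta t^{2}\|Du^{n}\|^{2}\leq (\lambda\|u^{0}\|)^{2}\Delta x^{2}\|Du^{n}\|^{2}$ by the same inverse inequality. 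Their sum is exactly $2\lambda\|u^{0}\|\bigl(\tfrac13+\tfrac12\lambda\|u^{0}\|\bigr)\Delta x^{2}\|Du^{n}\|^{2}$, and the CFL \eqref{cfl} is engineered so that this is at most $(1-\delta)\Delta x^{2}\|Du^{n}\|^{2}$, leaving precisely the $\delta\Delta x^{2}\|Du^{n}\|^{2}$ margin that \eqref{stability} demands. The induction on $n$ (needed to invoke $\|u^{n}\|\leq\|u^{0}\|$) closes automatically, since \eqref{stability} itself implies $\|u^{n+1}\|\leq\|u^{n}\|$.
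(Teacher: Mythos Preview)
Your proof is correct and follows essentially the same route as the paper: split the scheme into the Burgers part $B(u^{n})=\bar u^{n}-\Delta t\,\bar u^{n}Du^{n}$ and the dispersive part, use Lemma~\ref{P_fracLap}\ref{itm2.3} to kill the cross term so that $\|B(u^{n})\|^{2}=\|u^{n+1}\|^{2}+\Delta t^{2}\|\mathbb{D}^{\alpha}Du^{n+1}\|^{2}$, and then bound $\|B(u^{n})\|^{2}\le\|u^{n}\|^{2}-\delta\Delta x^{2}\|Du^{n}\|^{2}$ under the CFL condition. The only difference is cosmetic: the paper outsources this last Burgers estimate to Holden et al.\ \cite{holden2015convergence}, whereas you carry it out explicitly via the exact identity $\langle\bar u,\bar u\,Du\rangle=-\tfrac{\Delta x^{2}}{3}\Delta x\sum_{j}(Du_{j})^{3}$, which is precisely the computation behind that reference.
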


\begin{proof}
Following the approach of Holden et al. \cite{holden2015convergence} and expressing the Burgers' term as
\[
B(u) = \bar u - \Delta t \bar{u}Du = \bar u - \frac{\Delta t}{2} Du^2,
\]
we have
\begin{equation}\label{norm1}
   \|B(u)\|^2  \leq \|u\|^2 - \delta \Delta x^2\|Du\|^2
\end{equation}   
provided the CFL condition \eqref{cfl} holds.
    Next, we examine the scheme \eqref{FDscheme} and it can be represented as 
    \begin{equation*}
       u^{n+1} = B(u^n) -  \Delta t \mathbb{D}^{\alpha}(Du^{n+1}).
   \end{equation*}
   Taking the $\ell^2$-norm, we obtain
   \begin{align}
          \nonumber \|B(u^n)\|^2 &= \|u^{n+1}\|^2 + 2\Delta t(u^{n+1}, \mathbb{D}^{\alpha}(Du^{n+1})) + \Delta t^2\|\mathbb{D}^{\alpha}(Du^{n+1})\|^2 \\ 
          \nonumber &=  \|u^{n+1}\|^2  + \Delta t^2\|  \mathbb{D}^{\alpha}(Du^{n+1})\|^2\\
          \label{norm2} &=  \|u^{n+1}\|^2 + \Delta x^{3}
           \lambda^2\|\mathbb{D}^{\alpha}(Du^{n+1})\|^2.
     \end{align}
  Therefore, estimates \eqref{norm1} and \eqref{norm2} imply 
\begin{equation*}
    \|u^{n+1}\|^2 + \Delta x^{3}
       \lambda^2\|\mathbb{D}^{\alpha}(Du^{n+1})\|^2
       + \delta \Delta x^2\|Du^n\|^2 \leq \|u^n\|^2.
\end{equation*}
Hence the stability of \eqref{FDscheme} is ensured.
\end{proof}
Subsequently, we explore the temporal derivative bound of the scheme \eqref{FDscheme}. This bound is significant in the forthcoming convergence proof, as the proof relies on the compactness theorem. We start with by introducing the following notation for a given function $v$
\begin{align*}
    D_+^t v(t) = \frac{1}{\Delta t}(v(t+\Delta t) - v(t)).
\end{align*}
\begin{lemma}\label{TEMP_BOUNDimp}
    Let $u^{n}$ be an approximate solution obtained by the scheme \eqref{FDscheme}. Assume that $\lambda$ from Lemma \ref{stab} satisfies:
    \begin{equation}\label{cfl2}
        6\|u_0\|^2\lambda^2 + \|u_0\|\lambda < \frac{1-\Tilde{\delta}}{2}, \quad \tilde\delta \in (0,1).
    \end{equation}
    Then there holds
    \begin{equation}\label{stability2}
        \|D_+^tu^{n}\|^2+\Delta t^2\|\mathbb{D}^{\alpha}D(D_+^tu^{n})\|^2 + \Tilde{\delta}\Delta x^2\|D(D_+^tu^{n-1})\|^2 \leq (1+3\Delta t\|Du^n\|_\infty)\|D_+^tu^{n-1}\|^2.
    \end{equation}
    Moreover, the following estimates hold:
    \begin{align}
        \label{Temp_DER}\|D_+^tu^{n}\|&\leq C,\\
        \label{H_alpha}\|u^n\|_{h^{1+\alpha}} &\leq C,
    \end{align}
    where $C$ is a constant independent of $\Delta x$.
\end{lemma}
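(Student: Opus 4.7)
The plan is to apply $D_+^t$ to the scheme \eqref{FDscheme} written at consecutive time levels $n$ and $n-1$ and subtract, producing a discrete evolution equation for $v^n:=D_+^tu^n$. Using the telescoping identity
\[
\bar u^n D u^n-\bar u^{n-1}D u^{n-1}=\Delta t\bigl(\bar u^{n-1}Dv^{n-1}+\bar v^{n-1}Du^n\bigr),
\]
which follows from the product formula \eqref{prod1}, this rearranges to
\[
v^n+\Delta t\,\mathbb D^{\alpha}Dv^n=\bar v^{n-1}-\Delta t\,\bar u^{n-1}Dv^{n-1}-\Delta t\,\bar v^{n-1}Du^n.
\]

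Next I would take $\ell^2$-norms squared on both sides. On the left, the cross term $2\Delta t\langle v^n,\mathbb D^{\alpha}Dv^n\rangle$ vanishes by the skew-symmetry identity \ref{itm2.3} of Lemma \ref{P_fracLap}, leaving $\|v^n\|^2+\Delta t^2\|\mathbb D^{\alpha}Dv^n\|^2$. On the right, the summand $\bar v^{n-1}-\Delta t\,\bar u^{n-1}Dv^{n-1}$ is a linear advection-type expression in $v^{n-1}$, whose squared norm is bounded analogously to the derivation of \eqref{norm1} in Lemma \ref{stab} (using \eqref{prod1}, \eqref{prod2}, \eqref{idenDD}, \eqref{Dz_1z_2}, the inverse inequality \eqref{supnorm}, the averaging identity $\|\bar v\|^2=\|v\|^2-\Delta x^2\|Dv\|^2$, and the reinforced CFL \eqref{cfl2}), producing the dissipation $-\tilde\delta\Delta x^2\|Dv^{n-1}\|^2$. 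The remaining cross contributions from $-\Delta t\,\bar v^{n-1}Du^n$ are controlled pointwise by $\|Du^n\|_\infty$ via $2ab\le a^2+b^2$, producing the prefactor $(1+3\Delta t\|Du^n\|_\infty)\|v^{n-1}\|^2$ and hence \eqref{stability2}.

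For the uniform bound \eqref{Temp_DER}, I iterate \eqref{stability2} and apply discrete Gr\"onwall. The initial datum $\|v^0\|$ is obtained by testing the scheme at $n=0$ against $v^0$ and rewriting $\mathbb D^{\alpha}Du^1=\mathbb D^{\alpha}Du^0+\Delta t\,\mathbb D^{\alpha}Dv^0$; by Lemma \ref{P_fracLap}\ref{itm2.3} the contribution of $\Delta t\,\mathbb D^{\alpha}Dv^0$ drops, and the remainder is controlled through Lemma \ref{D_C} by $\|u_0\|_{H^{1+\alpha}(\R)}$. The principal obstacle is taming the Gr\"onwall factor $\exp\bigl(3\sum_k\Delta t\|Du^k\|_\infty\bigr)$ uniformly in $\Delta x$, because the bare inverse inequality only yields a bound blowing up like $\Delta x^{-3/2}$. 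This I handle by a bootstrap that propagates \eqref{H_alpha} jointly with \eqref{Temp_DER}: under the induction hypothesis $\|u^k\|_{h^{1+\alpha}}\le C_0$ for $k\le n$, the discrete Sobolev embedding (valid since $1+\alpha>3/2$ for $\alpha\ge 1$) gives $\|Du^k\|_\infty\le C\|u^k\|_{h^{1+\alpha}}$, making the Gr\"onwall factor uniform.

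Finally, \eqref{H_alpha} closes the bootstrap. Rearranging the scheme \eqref{FDscheme} divided by $\Delta t$ gives
\[
\mathbb D^{\alpha}Du^{n+1}=-v^n+\tfrac{\Delta x^2}{2\Delta t}D_+D_-u^n-\bar u^nDu^n,
\]
so $\|\mathbb D^{\alpha}Du^{n+1}\|$ is controlled by $\|v^n\|$ (via \eqref{Temp_DER}), $\|D_+D_-u^n\|$ (obtained from a discrete interpolation between $\|u^n\|$ and $\|\mathbb D^{\alpha}Du^n\|$, valid since $1+\alpha\ge 2$), and $\|\bar u^n\|_\infty\|Du^n\|$ (controlled by Lemma \ref{stab} together with the induction). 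Combined with the lower-order bounds $\|u^n\|+\|D_+u^n\|\le C$ of Lemma \ref{stab}, this produces $\|u^n\|_{h^{1+\alpha}}\le C$ uniformly in $\Delta x$, thereby closing the induction and establishing all three claims.
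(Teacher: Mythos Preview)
Your approach is essentially the same as the paper's. Both derive an evolution equation for $\tau^n:=D_+^t u^{n-1}$ by differencing the scheme at consecutive levels, exploit the skew-symmetry of $\mathbb D^\alpha D$ when taking squared norms, and then close via a Gr\"onwall/continuation argument. The only cosmetic difference is that the paper rewrites the nonlinear increment in conservation form,
\[
G(u^n)=\bar u^n Du^n-\bar u^{n-1}Du^{n-1}=\Delta t\Bigl(D(u^n\tau^n)-\tfrac{\Delta t}{2}D(\tau^n)^2\Bigr),
\]
so that the resulting quantity $\sigma^n=\bar\tau^n-\Delta t\,D(u^n\tau^n)+\tfrac{\Delta t^2}{2}D(\tau^n)^2$ can be bounded by a direct appeal to \cite[Lemma~3.2]{holden2015convergence}; your advection-form decomposition is algebraically equivalent (both equal $\bar\tau^n-\Delta t(\bar u^{n-1}D\tau^n+\bar\tau^n Du^n)$) but obliges you to reproduce that energy estimate rather than cite it. For \eqref{Temp_DER} and \eqref{H_alpha} the paper is terse and again defers to \cite{holden2015convergence}, whereas you spell out the joint bootstrap using the discrete Sobolev bound $\|Du^k\|_\infty\le C\|u^k\|_{h^2}\le C\|u^k\|_{h^{1+\alpha}}$; this is the correct mechanism and is implicitly what the cited argument does. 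One detail neither your sketch nor the paper's proof fully addresses is the control of the $\|D_+D_-u^n\|$ component of $\|u^n\|_{h^{1+\alpha}}$ (and the related numerical-viscosity remainder $\tfrac{\Delta x^2}{2\Delta t}D_+D_-u^n$ arising from $\bar u^n-u^n$): your appeal to a discrete interpolation between $\|u\|$ and $\|\mathbb D^\alpha Du\|$ is reasonable for $1+\alpha\ge 2$ but is not established anywhere in the paper.
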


\begin{proof}
From the scheme \eqref{FDscheme}, we have
\begin{align}\label{est1}
    D_+^t u_j^{n} = D_+^t\bar u_j^{n-1} - G(u^n)_j- \Delta t \mathbb{D}^{\alpha}(DD_+^tu^{n}_j),
\end{align}
where $G(u^n) = \bar u^n Du^n -\bar u^{n-1} Du^{n-1}$, and it can be further simplified as
\begin{align*}
    G(u^n) &= \bar u^n Du^n -\bar u^{n-1} Du^{n-1} \\
           &= \Delta t(D_+^t\bar u^{n-1} Du^n +\bar u^{n-1} DD_+^tu^{n-1})\\
           &= \Delta t(D_+^t\bar u^{n-1} Du^n +\bar u^{n} DD_+^tu^{n-1}-\Delta t            D_+^t\bar u^{n-1} DD_+^tu^{n-1})\\
           &=  \Delta t\left(D(u^nD_+^t u^{n-1})-\frac{\Delta t}{2}D(D_+^tu^{n-1})^2\right).
\end{align*}
With the help of the above identity in the equation \eqref{est1} and setting $\tau^n := D_+^tu^{n-1}$ yield
\begin{equation}\label{tau2}
    \tau^{n+1} = \sigma^n - \Delta t \mathbb{D}^{\alpha}D\tau^{n+1},
\end{equation}
where $\sigma$ is defined by
\begin{equation}\label{sigma}
    \sigma = \bar\tau - \Delta t D(u\tau)+ \frac{\Delta t^2}{2}D\tau^2.
\end{equation}

Now we will follow the same strategy applied in \cite[Lemma 3.2]{holden2015convergence} to evaluate \eqref{sigma}, which gives the following bound
\begin{equation}\label{sigma2}
    \frac{1}{2}\|\sigma^n\|^2+\Tilde{\delta}\frac{\Delta x^2}{2}\|D\tau^n\|^2\leq\frac{1}{2}\|\tau^n\|^2+\frac{3-\Tilde{\delta}}{2}\Delta t\|Du^n\|_\infty\|\tau^n\|^2.
\end{equation}
Squaring equation \eqref{tau2} and summing the resulting equation over $j\in \Z$ yields
\begin{equation*}
        \|\sigma^n\|^2 = \|\tau^{n+1}\|^2+\Delta t^2\|\mathbb{D}^{\alpha}D\tau^{n+1}\|^2.
\end{equation*}
Therefore, substituting the above identity in \eqref{sigma2}, we have
\begin{equation}\label{timebound}
    \begin{split}
        \|\tau^{n+1}\|^2+\Delta t^2\|\mathbb{D}^{\alpha}D\tau^{n+1}\|^2 + \Tilde{\delta}\Delta x^2\|D\tau^n\|^2 \leq (1+3\Delta t\|Du^n\|_\infty)\|\tau^n\|^2.
    \end{split}
\end{equation}
This gives the estimate \eqref{stability2}. Dropping the positive term from left hand side in  \eqref{timebound}, we have
\begin{equation*}
    \begin{split}
        \|\tau^{n+1}\|^2 \leq \|\tau^n\|^2+3\Delta t\|Du^n\|_\infty\|\tau^n\|^2.
    \end{split}
\end{equation*}
Again, following the approach of Holden et al. \cite{holden2015convergence} ensures the existence of $T>0$ such that the following estimate hold:
$$\|\tau^n\|\leq C, \qquad(n+1)\Delta t\leq T,$$ where $C$ is a constant independent of $\Delta x$. This is a temporal derivative bound.

Finally, utilizing these bounds, we obtain
 \begin{equation*}
        \|\mathbb{D}^{\alpha}Du^{n}\|\leq\|D_{+}^{t}u^n\| + \|\bar u^n\|_\infty\|Du^n\|\leq C,\quad  (n+1)\Delta t\leq T,
    \end{equation*}
     where $C$ is a constant independent of $\Delta x$. This implies \eqref{H_alpha}.
\end{proof}
\subsection{Convergence}
We follow the approach outlined by Sj\"{o}berg \cite{sjoberg1970korteweg} to establish the convergence of the scheme for $t < T$. The construction of the approximate solution $u_{\Delta x}$ is carried out in two distinct steps of the piece-wise interpolation. Firstly, we perform interpolation in space for each $t_n$:
\begin{equation}\label{interpolation1}
\begin{split}
    u^{n}(x) = u_{j}^{n} + Du_{j}^{n}(x-x_j) ,\quad x\in [x_j,x_{j+1}), \quad j\in\mathbb{Z}.
\end{split} 
\end{equation}
Following this, we perform interpolation in time for all $x\in\mathbb{R}$:
\begin{equation}\label{Interpolation2}
u_{\Delta x} (x,t) = u^n(x)  + D_{+}^{t}u^n(x)(t-t^n),  \quad t\in[t_n,t_{n+1}),\quad (n+1)\Delta t \leq \bar{T}.
\end{equation}
Note that the interpolation satisfies at nodes, i.e., for all $j\in \mathbb{Z}$ and $n\in \mathbb{N}_0$,  $u_{\Delta x}(x_j,t_n) = u_{j}^{n}$.

With these interpolations in place, we proceed to state and prove the main result of this section.
\begin{theorem}
    Let $u_0\in H^{1+\alpha}(\mathbb{R})$, $\alpha\in[1,2)$. Then there is a finite time $T>0$, depending on $\|u_0\|_{H^{1+\alpha}(\R)}$, such that for $t \leq T$ and $\Delta t = \mathcal{O}(\Delta x^{3/2})$, the sequence of approximate solutions obtained by the scheme \eqref{FDscheme} uniformly converges to the unique solution of the fractional KdV equation \eqref{fkdv} in $C(\mathbb{R}\times [0, T])$ as $\Delta x \xrightarrow[]{} 0$. 
\end{theorem}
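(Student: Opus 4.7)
The plan is to follow the classical compactness approach of Sj\"{o}berg \cite{sjoberg1970korteweg}, adapted to the present fractional and fully discrete setting. The strategy is to extract a limit $u$ of the interpolated approximations $u_{\Delta x}$ defined in \eqref{interpolation1}--\eqref{Interpolation2}, identify it as a distributional solution of \eqref{fkdv}, and then upgrade it to the unique classical solution using the regularity inherited from the uniform bounds of Section \ref{sec3}.

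First, combining the stability estimate of Lemma \ref{stab} with the $h^{1+\alpha}$ and time-derivative bounds of Lemma \ref{TEMP_BOUNDimp}, I obtain uniform control of $\|u^n\|_{h^{1+\alpha}}$ and $\|D_{+}^{t}u^n\|$ for every $n$ with $n\Delta t\leq T$, where the admissible horizon $T>0$ emerges from the bootstrap in Lemma \ref{TEMP_BOUNDimp} and depends only on $\|u_0\|_{H^{1+\alpha}(\R)}$ through Lemma \ref{D_C}; the CFL scaling $\Delta t = \mathcal{O}(\Delta x^{3/2})$ keeps $\lambda$ bounded, so both stability lemmas apply. Combined with the discrete embedding \eqref{supnorm}, these bounds yield uniform $L^\infty$ control of $u_{\Delta x}$ and of its discrete first spatial derivative, as well as uniform Lipschitz regularity in time through $D_+^t u^n$. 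Arzel\`{a}--Ascoli together with a diagonal argument over compact subsets then produces a subsequence $u_{\Delta x_k}$ that converges locally uniformly on $\R\times[0,T]$ to some $u\in C(\R\times[0,T])$; weak compactness from the $h^{1+\alpha}$ bound further places $u$ in $L^\infty(0,T;H^{1+\alpha}(\R))$ with $\partial_t u \in L^\infty(0,T;L^2(\R))$.

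The core of the argument is to pass to the limit in the scheme \eqref{FDscheme}. Multiplying by $\Delta x\,\Delta t\,\phi(x_j,t_n)$ for a test function $\phi \in C_c^\infty(\R \times [0,T))$ and summing over $j,n$, I transfer the discrete derivatives onto $\phi$ using the summation-by-parts identity \eqref{idenDD} together with the symmetry/skew-symmetry of $\mathbb{D}^\alpha$ from Lemma \ref{P_fracLap}. The convective contribution $\bar u^n D u^n = \tfrac{1}{2}D((u^n)^2)$, by \eqref{prod1}, becomes a pairing of $(u_{\Delta x})^2$ against $D\phi$, which by the uniform convergence of $u_{\Delta x}$ passes to $\tfrac{1}{2}u^2\partial_x\phi$. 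The fractional dispersive contribution $\langle \mathbb{D}^\alpha D u^{n+1},\phi\rangle$ I rewrite as $-\langle u^{n+1}, \mathbb{D}^\alpha D\phi\rangle$, and then combine the commutation in Lemma \ref{P_fracLap} with the consistency Lemma \ref{consit} (applied to $\partial_x\phi \in C_c^4(\R)$) to deduce that $\mathbb{D}^\alpha D\phi$ converges in $L^2$ to $-(-\Delta)^{\alpha/2}\partial_x\phi$; together with the $L^2$-convergence of $u_{\Delta x}$ this passes the term to $\int u\,(-\Delta)^{\alpha/2}\partial_x\phi$. The discrete time derivative is handled in the usual way via summation by parts in $n$ and the uniform bound on $D_+^t u^n$, while the numerical viscosity $\bar u^n - u^n = \tfrac{\Delta x^2}{2}D_+D_- u^n$ vanishes in the limit because $\|D_+D_-u^n\|$ is uniformly bounded. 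Collecting these identifications shows that $u$ satisfies \eqref{fkdv} in the distributional sense.

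Finally, since $u\in L^\infty(0,T;H^{1+\alpha}(\R))$ with $\alpha\geq 1$, Sobolev embedding gives $u(\cdot,t)$ and $\partial_x u(\cdot,t)$ continuous on $\R$ for almost every $t$; this together with $\partial_t u\in L^2$ and the equation itself allows a bootstrap that upgrades $u$ to a classical solution of \eqref{fkdv}. Uniqueness of classical solutions in $H^{1+\alpha}(\R)$, guaranteed by the well-posedness results cited in the introduction, then forces the entire family $u_{\Delta x}$ (not merely a subsequence) to converge uniformly to $u$ on $\R\times[0,T]$, yielding convergence in $C(\R\times[0,T])$. The main obstacle I anticipate is the delicate interplay between the fully discrete scheme and the nonlocal operator: one must upgrade subsequential weak bounds into sufficiently strong convergence of both $u_{\Delta x}$ and $Du_{\Delta x}$ for the quadratic product to pass to the limit, while simultaneously ensuring that the summation-by-parts manoeuvre that moves $\mathbb{D}^\alpha D$ onto the smooth test function is compatible with the consistency established in Lemma \ref{consit}, which is formulated there only for compactly supported $C^4$ functions and must be exploited through a careful density/commutation argument.
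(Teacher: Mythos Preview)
Your proposal is correct and follows essentially the same route as the paper: uniform $h^{1+\alpha}$ and $D_+^t$ bounds from Lemmas \ref{stab}--\ref{TEMP_BOUNDimp}, compactness via Arzel\`{a}--Ascoli, a Lax--Wendroff passage to the limit using summation by parts together with Lemma \ref{P_fracLap} and the consistency Lemma \ref{consit}, and finally the uniqueness of $H^{1+\alpha}$ solutions to remove the subsequence. The paper works in $C([0,T];L^2(\R))$ and introduces an auxiliary piecewise-constant interpolant for the weak-form argument, but these are cosmetic variants of exactly the steps you outline.
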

   \begin{proof}
   Interpolation \eqref{Interpolation2} gives that $u_{\Delta x}$ is smooth enough. Differentiating $u_{\Delta x}$ in both space and time for $x\in[x_j,x_{j+1})$ and $t\in[t_n,t_{n+1})$ gives
\begin{equation*}
    \begin{split}
    \partial_x u_{\Delta x}(x,t) &= Du_j^n + D_+^t\left(Du_j^n\right)(t-t_n),\\
    \partial_t u_{\Delta x}(x,t) &= D^+_tu^n(x),
    \end{split}
\end{equation*}
which clearly implies for all $t\leq T$,
 \begin{align}
        &\|u_{\Delta x}(\cdot,t)\|_{L^2(\mathbb{R})}  \leq \|u_0\|_{L^2(\mathbb{R})} \label{bound1},\\
        &\|\partial_x u_{\Delta x}(\cdot,t)\|_{L^2(\mathbb{R})} \leq C \label{bound2},\\
    & \|\partial_t u_{\Delta x}(\cdot,t)\|_{L^2(\mathbb{R})} \leq C \label{bound3},\\
    & \|-(-\Delta)^{\alpha/2}\partial_{x} u_{\Delta x}(\cdot,t)\|_{L^2(\mathbb{R})} \leq C \label{bound4},
\end{align}
 where $C$ is a constant independent of $\Delta x$. The first estimate \eqref{bound1} follows from the exact integration of the square of \eqref{Interpolation2} over each interval $[x_j,x_{j+1})$ and summation over $j$. Similarly, estimate \eqref{stability} implies \eqref{bound2} and \eqref{Temp_DER} implies \eqref{bound3}. Since for $t<T$, $\partial_x u_{\Delta x}(\cdot,t)$ is constant in the interval $[x_j,x_{j+1})$ for all $j\in \mathbb Z$, we can take into account the Lemma \ref{D_C} and bound \eqref{H_alpha} from the Lemma \ref{TEMP_BOUNDimp} to establish the estimate \eqref{bound4} as $\Delta x \to 0$.

The temporal derivative bound on the approximate solutions $u_{\Delta x}$ establishes that for every possible $\Delta x>0$, $u_{\Delta x}\in \text{Lip}([0,T];L^2(\mathbb{R}))$. Employing the bound \eqref{bound1}, we can apply the Arzelà-Ascoli theorem, indicating that the set of approximate solutions $\{u_{\Delta x_j}\}_{j\in\mathbb Z}$ is sequentially compact in $C([0,T];L^2(\mathbb{R}))$. Consequently, this implies the existence of a subsequence $\Delta x_{j_{_{k}}}$ such that 
\begin{equation}\label{CONV_Utouimp}
    u_{\Delta x_{j_{_{k}}}} \xrightarrow[]{\Delta x_j \xrightarrow[]{}0} u \text{ uniformly in } C([0,T];L^2(\mathbb{R})).
\end{equation}
   Now we claim that $u$ is a weak solution of the equation \eqref{fkdv}, that is, we need show that $u$ satisfies the following equation:
\begin{equation}\label{weaks_Odximp}
    \int_0^T \int_{-\infty}^{\infty}\left(\varphi_t u + \varphi_x \frac{u^2}{2} -(-\Delta)^{\alpha/2}\varphi_x u\right)\,dx\,dt + \int_{-\infty}^{\infty}\varphi(x,0)u_0(x)\,dx = \mathcal{O}(\Delta x),
\end{equation}
for all test functions $\varphi\in C_c^\infty(\R\times[0,T])$.

We employ a Lax-Wendroff type argument inspired by \cite{holden1999operator}. Let us define a piecewise constant interpolation for the approximate solution by the following:
\begin{equation}\label{simpintp}
    \bar{u}_{\Delta x}(x,t) = u_j^n  \text{ for } x\in [x_j,x_{j+1}),~j\in\Z \text{ and } t\in[t_n,t_{n+1}), ~ t_{n+1}\leq T.
\end{equation}
Assuming $\Delta x$ is sufficiently small, it is convenient to use a interpolation \eqref{simpintp} instead of interpolation \eqref{Interpolation2} to follow the proof of Lax-Wendroff type result. It is worth noting that $\bar{u}_{\Delta x}(\cdot,t_n)\to u(\cdot,t_n)$ in $L^2(\mathbb{R})$ as $\Delta x \to0$ for every $t_n\leq T$.  
Let us take a test function $\varphi\in C_c^\infty(\R\times[0,T])$ and denote $\varphi(x_j,t_n) = \varphi_j^n$ at nodes. We multiply \eqref{FDscheme} by $\Delta x \Delta t \varphi_j^n $ and taking the summation over all $n$ and $j$ to obtain 
    \begin{align*}
    \nonumber\Delta t \Delta x \sum_n\sum_j\varphi_j^n \left(\frac{u_j^{n+1}-\bar u_j^{n}}{\Delta t}\right) +& \Delta t \Delta x \sum_n\sum_j\varphi_j^n \frac{D(u^n_j)^2}{2}\\ \label{Sumscheme} +&\Delta t \Delta x \sum_n\sum_j\varphi_j^n \mathbb{D}^{\alpha} Du_j^{n+1/2} = 0, \quad n\Delta t\leq T, \quad j\in \mathbb{Z}.
\end{align*}
Following the approach in \cite{dutta2016convergence} and \cite{holden1999operator}, we can show that 
    \begin{align*}
        \Delta t \Delta x \sum_n\sum_j\varphi_j^n \left(\frac{u_j^{n+1}-\bar u_j^{n}}{\Delta t}\right) &= -\Delta t \Delta x \sum_n\sum_j D_+^t\varphi_j^n u^{n}_j\\& \xrightarrow[]{\Delta x \xrightarrow[]{} 0} -\int_0^T\int_\R u\varphi_t\,dx\,dt -  \int_{\R}\varphi(x,0)u_0(x)\,dx,
    \end{align*}
    and 
     \begin{align*}
        \Delta t \Delta x\sum_n\sum_j\varphi_j^n \frac{D(u^n_j)^2}{2} \xrightarrow[]{\Delta x \xrightarrow[]{} 0} -\int_0^T\int_\R \frac{u^2}{2}\varphi_x\,dx\,dt.
    \end{align*}
Now we estimate the term involving the fractional Laplacian by using the properties of discrete fractional Laplacian described in Lemma \ref{P_fracLap}.
   Hereby we use the same notation for the inner product in $L^2$ and $\ell^2$.
   \begin{align*}
       \Delta t \Delta x \sum_n\sum_j\varphi_j^n \mathbb{D}^{\alpha} Du_j^{n+1} =& -\Delta t \sum_n\langle u^{n+1}, \mathbb{D}^{\alpha} D\varphi^n\rangle.
   \end{align*}
Since $\varphi(\cdot,t_n)$ and $\varphi_x(\cdot,t_n)$ are smooth functions, $D\varphi^n$ converges to $\varphi_x(\cdot,t_n)$ uniformly as $\Delta x\to 0$. Then, we have
    \begin{align*}
    \Biggr|\langle  u^{n+1}, &\mathbb{D}^{\alpha} D\varphi^n\rangle - \langle  u(\cdot,t_{n+1}),-(-\Delta)^{\alpha/2} \varphi_x(\cdot,t_n)\rangle\Biggr|\\
    \leq& \Biggr|\langle  u^{n+1}-u(\cdot,t_{n+1}), \mathbb{D}^{\alpha} D\varphi^n\rangle\Biggr|+\Biggr|\langle  u(\cdot,t_{n+1}), \mathbb{D}^{\alpha} D\varphi^n - (-(-\Delta)^{\alpha/2}) \varphi_x(\cdot,t_n) \rangle\Biggr|\\
    \leq & \norm{u^{n+1}-u(\cdot,t_{n+1})}\norm{\mathbb{D}^{\alpha} D\varphi^n} + \norm{u(\cdot,t_{n+1})}\norm{\mathbb{D}^{\alpha}D\varphi^n - (-(-\Delta)^{\alpha/2})\varphi_x(\cdot,t_n)}.
    \end{align*}
The first term converges to zero using \eqref{CONV_Utouimp}, and by applying the Lemma \ref{consit}, it is evident that the second term also vanishes as $\Delta x\to0$. Consequently, we have demonstrated that $u$ satisfies \eqref{weaks_Odximp}, which signifies that it is a weak solution.
     
Finally, the estimates \eqref{bound1}-\eqref{bound4} ensure that the weak solution $u$ satisfies the equation \eqref{fkdv} as an $L^2$-identity. Hence considering the initial data $u_0\in H^{1+\alpha}(\R)$, the limit $u$ becomes the unique solution of the fractional KdV equation \eqref{fkdv}. This completes the proof.
\end{proof}
\section{Crank-Nicolson finite difference scheme}\label{sec4}
The essence of this study lies in the unveiling of a robust Crank-Nicolson temporal discretized finite difference scheme tailored for the precise numerical approximation of the solution to the fractional KdV equation \eqref{fkdv}, encapsulated by the following expression:
\begin{equation}\label{CNscheme}
    u_j^{n+1} = u_j^{n} - \Delta t \mathbb G(u^{n+1/2}) - \Delta t \mathbb{D}^{\alpha} Du_j^{n+1/2}, \quad n\in\mathbb{N}_0, \quad j\in \mathbb{Z},
\end{equation}
where $\mathbb G(u^{n+1/2}) := \Tilde{u}^{n+1/2}Du^{n+1/2}$ and $ u^{n+1/2}:= \frac{1}{2}(u^n + u^{n+1})$. For the discretiation of the initial data, we set $u_j^0 = u_0(x_j)$ for $j\in\mathbb{Z}$. In order to ensure that the scheme is well-defined and guarantee the existence of a solution, we strategically adopt the proven methodology explained in \cite{dutta2016convergence}, involving a fixed-point iteration approach.
For the solvability of scheme \eqref{CNscheme}, we introduce the sequence $\{w^\ell\}_{\ell\geq 0}$ for the fixed-point iteration with $w^{\ell+1}$ as the solution to the following equation:
\begin{equation}\label{iterr}
    \begin{cases}
        w^{\ell+1} = u^n - \Delta t \mathbb{G}\left(\frac{u^n+w^\ell}{2}\right) - \Delta t \mathbb{D}^\alpha D\left(\frac{u^n+w^{\ell+1}}{2}\right),\\
        w^0 = u^n.
    \end{cases}
\end{equation}
To establish the existence and uniqueness of the sequence $w^\ell$, we reformulate the iteration in a linear framework:
\begin{equation}\label{Pos_Def}
    \left(1+\frac{\Delta t}{2}\mathbb{D}^\alpha D\right)w^{\ell+1} =  u^n - \Delta t \mathbb{G}\left(\frac{u^n+w^\ell}{2}\right) - \frac{\Delta t}{2} \mathbb{D}^\alpha Du^n.
\end{equation}
A key insight, supported by the Lemma \ref{P_fracLap}, affirms that the operator $\frac{\Delta t}{2}\mathbb{D}^\alpha D$ is skew-symmetric. This property renders the coefficient operator on the left-hand side of \eqref{Pos_Def} to be positive definite, thereby ensuring the existence and uniqueness of the iterative sequence \eqref{iterr}. 

We describe the following lemma which establishes that the scheme \eqref{CNscheme} is solvable at each time step. In addition,
the lemma serves as a cornerstone for our subsequent stability analysis.
\begin{lemma}\label{wllemma}
    Let $K = \frac{6-L}{1-L} > 6$ be a constant and $L\in(0,1)$. Consider the fixed-point iteration defined by \eqref{iterr}. Assume that the CFL condition for $\Delta x$ and $\Delta t$:
    \begin{equation}\label{CNCFL}
        \lambda \leq \frac{L}{K\|u^n\|_{h^2}},
    \end{equation}
    where $\lambda = \frac{\Delta t}{\Delta x}$. Then there exists a solution $u^{n+1}$ to equation \eqref{CNscheme} and $\lim_{\ell\to\infty} w^{\ell} = u^{n+1}$. Moreover, the following estimate holds:
    \begin{equation}\label{stabn}
        \|u^{n+1}\|_{h^2} \leq K \|u^n\|_{h^2}.
    \end{equation}
\end{lemma}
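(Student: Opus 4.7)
The plan is to derive all three conclusions---solvability of \eqref{CNscheme}, convergence $w^\ell\to u^{n+1}$, and the $h^2$ bound---from a single induction-plus-contraction analysis of the linear iteration \eqref{iterr}. The first observation is that in the form \eqref{Pos_Def}, the coefficient operator $I+\tfrac{\Delta t}{2}\mathbb{D}^{\alpha}D$ is strictly positive definite on $\ell^2$, because $\langle\mathbb{D}^{\alpha}Dv,v\rangle=0$ by Lemma \ref{P_fracLap}\ref{itm2.3}; consequently each $w^{\ell+1}$ is uniquely determined by $w^\ell$, and the sequence $\{w^\ell\}_{\ell\geq 0}$ is well-defined.

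The heart of the argument is the uniform $h^2$ bound $\|w^\ell\|_{h^2}\leq K\|u^n\|_{h^2}$, which I would prove by induction on $\ell$. The base case $\ell=0$ is immediate since $w^0=u^n$ and $K>1$. For the inductive step, I would rewrite \eqref{iterr} in the symmetric form
\[ w^{\ell+1}-u^n \;=\; -\Delta t\,\mathbb{G}\!\bigl(\tfrac{u^n+w^\ell}{2}\bigr)\;-\;\tfrac{\Delta t}{2}\,\mathbb{D}^{\alpha}D\bigl(u^n+w^{\ell+1}\bigr), \]
and take the $\ell^2$ inner product against $u^n+w^{\ell+1}$; the fractional contribution vanishes by Lemma \ref{P_fracLap}\ref{itm2.3}, leaving
\[ \|w^{\ell+1}\|^2-\|u^n\|^2 \;=\; -\Delta t\,\bigl\langle \mathbb{G}\!\bigl(\tfrac{u^n+w^\ell}{2}\bigr),\,u^n+w^{\ell+1}\bigr\rangle. \]
Since $D_+$ and $D_+D_-$ commute with $\mathbb{D}^{\alpha}D$ by Lemma \ref{P_fracLap}\ref{itm2.2}, applying these operators to \eqref{iterr} and repeating the inner-product step produces analogous identities for $\|D_+w^{\ell+1}\|^2$ and $\|D_+D_-w^{\ell+1}\|^2$. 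Expanding the convection differences via the product formulas \eqref{prod1}, \eqref{prod2} and \eqref{D3z_1z_2}, bounding $\ell^\infty$-norms of averaged factors through the inverse inequalities \eqref{supnorm}, and invoking the inductive hypothesis, every right-hand side is controlled by $C\Delta t\,\|u^n\|_{h^2}^2\,\|u^n+w^{\ell+1}\|_{h^2}$, where $\Delta t$ combines with a $\Delta x^{-1}$ factor to yield $\lambda$. Summing the three estimates, using $\|u^n+w^{\ell+1}\|_{h^2}\leq(1+K)\|u^n\|_{h^2}$, and invoking the CFL condition \eqref{CNCFL} reduces the closure of the induction to the arithmetic constraint $K(1-L)=6-L$, which is exactly the definition of $K$.

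For the contraction, setting $v^\ell:=w^{\ell+1}-w^\ell$ and subtracting two consecutive instances of \eqref{iterr} gives
\[ v^{\ell+1}+\tfrac{\Delta t}{2}\mathbb{D}^{\alpha}Dv^{\ell+1} \;=\; -\Delta t\bigl[\mathbb{G}\!\bigl(\tfrac{u^n+w^\ell}{2}\bigr)-\mathbb{G}\!\bigl(\tfrac{u^n+w^{\ell-1}}{2}\bigr)\bigr]. \]
Squaring the $\ell^2$ norm and using skew-symmetry to kill the cross term yields $\|v^{\ell+1}\|\leq \Delta t\,\|\mathbb{G}(a^\ell)-\mathbb{G}(a^{\ell-1})\|$ with $a^\ell:=(u^n+w^\ell)/2$. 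The difference expands via a product rule as $\widetilde{a^\ell}\,D(a^\ell-a^{\ell-1})+\widetilde{(a^\ell-a^{\ell-1})}\,Da^{\ell-1}$, so using the uniform bound from the previous step together with \eqref{supnorm} gives $\|v^{\ell+1}\|\leq C\lambda\|u^n\|_{h^2}\,\|v^\ell\|$, which is strictly less than $\|v^\ell\|$ by \eqref{CNCFL}. Hence $\{w^\ell\}$ is Cauchy in $\ell^2$, converges to some $u^{n+1}$ solving \eqref{CNscheme}, and \eqref{stabn} passes to the limit by weak lower semicontinuity of $\|\cdot\|_{h^2}$.

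The main technical obstacle is the $D_+D_-$ part of the induction. Expanding $D_+D_-\mathbb{G}(v)$ via \eqref{D3z_1z_2} produces four cross terms, one of which carries a third-order difference $D_+D_-Dv$ that is not controlled by $\|v\|_{h^2}$ without an inverse factor $\Delta x^{-1}$. The remedy is a discrete summation-by-parts using \eqref{idenDD} inside the inner product against $D_+D_-(u^n+w^{\ell+1})$, which shifts one derivative off $v$ onto the test vector; tracking the resulting constants through these rearrangements is what forces the precise balance between $K$ and $L$.
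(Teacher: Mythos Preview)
Your proposal follows essentially the same strategy as the paper, which itself defers almost all details to \cite[Lemma~2.5]{dutta2016convergence}: establish a uniform $h^2$ bound on the iterates by induction, show the iteration contracts, and pass to the limit. The only structural difference is that the paper (via the cited reference) applies $D_+D_-$ to the \emph{difference} equation and tests against $D_+D_-\Delta w^\ell$, obtaining Cauchy directly in the $h^2$ seminorm, whereas you prove contraction only in $\ell^2$ and then recover the $h^2$ bound on the limit by lower semicontinuity. Both routes are valid.

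There is one genuine imprecision in your inductive step. You write that after summing the three energy identities you ``use $\|u^n+w^{\ell+1}\|_{h^2}\leq(1+K)\|u^n\|_{h^2}$'' to close the induction. But this inequality is precisely what you are trying to prove at stage $\ell+1$, so invoking it is circular. The correct maneuver is the one implicit in your own identities: since each of the three estimates has the form
\[
\|D w^{\ell+1}\|^2-\|D u^n\|^2 \;\le\; \Delta t\,\|D\text{(nonlinear)}\|\,\bigl(\|D u^n\|+\|D w^{\ell+1}\|\bigr)
\]
for the relevant difference operator $D\in\{I,D_+,D_+D_-\}$, you should factor the left side as a difference of squares and divide through by $\|D u^n\|+\|D w^{\ell+1}\|$, obtaining a linear bound $\|D w^{\ell+1}\|\le\|D u^n\|+\Delta t\,\|D\text{(nonlinear)}\|$ with no $w^{\ell+1}$ on the right. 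Summing these and bounding the nonlinear norms via the inverse inequality and the inductive hypothesis on $w^\ell$ (not $w^{\ell+1}$) then yields the algebraic constraint $K(1-L)=6-L$ honestly. Once you make this correction, the argument goes through.
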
 
\begin{proof}
Set $\Delta w^\ell := w^{\ell+1} - w^\ell $, we have
\begin{equation}\label{iterrwl}
    \left(1+\frac{1}{2}\Delta t \mathbb{D}^{\alpha}D\right)\Delta w^\ell = -\Delta t \left[\mathbb{G}\left(\frac{u^n+w^\ell}{2}\right) - \mathbb{G}\left(\frac{u^n+w_{\ell-1}}{2}\right)\right]:=-\Delta t \Delta \mathbb{G}.
\end{equation}
Applying the discrete operator $D_+D_-$ to \eqref{iterrwl}, then multiply with $\Delta x D_+D_-\Delta w^\ell$ and summing over $j\in \mathbb{Z}$, we have
\begin{equation*}
   \begin{split}
    \|D_+D_-\Delta w^\ell\|^2 &= -\Delta t \left\langle D_+D_-\Delta \mathbb{G}, D_+D_-\Delta w^\ell\right\rangle\\
     &\leq  \Delta t  \|D_+D_-\Delta \mathbb{G}\| \|D_+D_-\Delta w^\ell\|.
     \end{split}
\end{equation*}
where we have used the Lemma \ref{P_fracLap} for the fractional term. 
Following closely the steps from Dutta et al. \cite[Lemma 2.5]{dutta2016convergence}, we have that the sequence $\{w^\ell\}$ is Cauchy, hence converges. In addition, we have the estimate \eqref{stabn}. 
\end{proof}

\begin{remark}
    We have shown that the devised scheme is solvable for each time step assuming the CFL condition \eqref{CNCFL}, where $\lambda$ is bounded by the $h^2$-bound of approximate solution $u^n$. Since the CFL condition \eqref{CNCFL} depends on $u^n$, not on the initial condition directly. Consequently, in order to provide a comprehensive assessment of the stability of our computed solution $u^n$, we must embark on a thorough stability analysis that delves into the intricacies of the evolving solution over time.
\end{remark}

Now we prove a fundamental stability lemma.
\subsection{Stability Lemma}
\begin{lemma}\label{Stablemma}
    Let $u_0\in H^{1+\alpha}(\R)$. Assume that $\Delta t$ satisfies 
    \begin{equation}\label{CFLCNi}
        \lambda \leq \frac{L}{KY}
    \end{equation}
    for some $Y=Y\left(\|u_0\|_{H^2(\R)},\|u_0\|_{H^{1+\alpha}(\R)}\right)$ and $\lambda = \Delta t/\Delta x$. Then there is a finite time $T>0$ and a constant $C$ both depending on $\|u_0\|_{h^2}$ and $\|u_0\|_{h^{1+\alpha}}$ such that 
    \begin{align}
        \label{stabcn}\|u^n\|_{h^2} &\leq C, \qquad \text{ for }  t_n\leq T, \\
        \label{tempbound} \|D_+^t u^n\| &\leq C, \qquad \text{ for }  t_n\leq T,\\
         \label{stabcn2}\|u^n\|_{h^{1+\alpha}} &\leq C, \qquad \text{ for }  t_n\leq T.
    \end{align}
    \eqref{stabcn2} is a stability estimate.
\end{lemma}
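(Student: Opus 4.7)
The plan is to prove the three estimates by a bootstrap/Gronwall-type argument, starting from the tentative one-step bound of Lemma \ref{wllemma} and using energy testing against half-step quantities, with the fractional term conveniently disappearing thanks to Lemma \ref{P_fracLap}\ref{itm2.3}.

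First I would establish the $L^2$-conservation. Taking the $\ell^2$-inner product of \eqref{CNscheme} with $u^{n+1/2}=\tfrac12(u^n+u^{n+1})$ and using $\langle u^{n+1}-u^n, u^{n+1/2}\rangle = \tfrac12(\|u^{n+1}\|^2-\|u^n\|^2)$, the fractional term vanishes by Lemma \ref{P_fracLap}\ref{itm2.3}, while the convective contribution $\langle \widetilde u^{n+1/2}Du^{n+1/2}, u^{n+1/2}\rangle$ can be shown to vanish from the product rule \eqref{prod1} and a summation by parts (this is the analogue of the continuous identity $\int (u^2)_x u\,dx = 0$, adapted to the $\widetilde{\,\cdot\,}$ average exactly as in Dutta et al.\ \cite{dutta2016convergence}). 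This yields $\|u^{n+1}\|=\|u^n\|=\|u^0\|$ unconditionally.

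Next I would control the first and second discrete derivatives by testing the scheme after applying $D$ and then $D_+D_-$. Applying $D$ to \eqref{CNscheme} and pairing with $Du^{n+1/2}$, the fractional part again dies by Lemma \ref{P_fracLap}\ref{itm2.3} (since $D$ commutes with $\mathbb D^\alpha$), so the only task is to bound $\langle D(\widetilde u^{n+1/2}Du^{n+1/2}), Du^{n+1/2}\rangle$ via the product rules \eqref{prod1}--\eqref{prod2} and identity \eqref{Dz_1z_2}, getting something like $C\|u^{n+1/2}\|_\infty \|Du^{n+1/2}\|^2$; controlling $\|u^{n+1/2}\|_\infty$ by $\|u^{n+1/2}\|_{h^2}$ via the Sobolev-type embedding implicit in \eqref{supnorm} (or rather the $\ell^2$-summability of $Du$). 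The second-derivative estimate requires applying $D_+D_-$ to the scheme, pairing with $D_+D_-u^{n+1/2}$, and using the Leibniz rule \eqref{D3z_1z_2} to expand $D_+D_-(\widetilde u\, Du)$; after telescoping one gets an inequality of the form
\begin{equation*}
\|u^{n+1}\|_{h^2}^2 \leq \|u^n\|_{h^2}^2 + C\Delta t\,\bigl(1+\|u^{n+1/2}\|_{h^2}\bigr)\|u^{n+1/2}\|_{h^2}^2.
\end{equation*}
Combining the three energy identities and applying a discrete Gronwall inequality produces a time $T=T(\|u_0\|_{h^2})>0$ and a constant $C$ such that \eqref{stabcn} holds for all $t_n\leq T$; the CFL condition \eqref{CFLCNi} is invoked to keep the iteration of Lemma \ref{wllemma} valid throughout this interval, using Lemma \ref{D_C} to pass from $\|u_0\|_{H^2}$ to $\|u_0\|_{h^2}$.

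For the temporal bound \eqref{tempbound} I would subtract two consecutive instances of \eqref{CNscheme}, divide by $\Delta t$, and test against $(D_+^t u^n + D_+^t u^{n-1})/2$, mimicking the argument used for Lemma \ref{TEMP_BOUNDimp}: the fractional piece once more cancels, the nonlinear increment $\mathbb G(u^{n+1/2})-\mathbb G(u^{n-1/2})$ is rewritten through the identity $\bar a Da-\bar b Db = D((a+b)/2\cdot(a-b)) - \tfrac12 D((a-b)^2)/2$ adapted to the $\widetilde{\,\cdot\,}$ average, producing terms controlled by $\|u^n\|_{h^2}$ times $\|D_+^t u^{n-1}\|^2$; a second discrete Gronwall on the (possibly shortened) interval $[0,T]$ then gives the uniform temporal bound. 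Finally, \eqref{stabcn2} is essentially free: rewriting \eqref{CNscheme} as
\begin{equation*}
\mathbb D^\alpha D u^{n+1/2} = -D_+^t u^n - \widetilde u^{n+1/2}Du^{n+1/2}
\end{equation*}
and taking the $\ell^2$-norm yields $\|\mathbb D^\alpha D u^{n+1/2}\|\leq \|D_+^t u^n\|+\|u^{n+1/2}\|_\infty\|Du^{n+1/2}\|$, which by the two previous bounds is uniformly controlled; together with \eqref{stabcn} this is precisely the $h^{1+\alpha}$ estimate.

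The main obstacle will be the $D_+D_-$-energy step: the identity \eqref{D3z_1z_2} generates four terms when applied to $\widetilde u\,Du$, and the awkward one involving $D_+u\cdot D_+D_-u$ must be reorganised by a discrete integration by parts so that only $\|Du\|_\infty$ (hence $\|u\|_{h^2}$ through \eqref{supnorm}) multiplies $\|D_+D_-u\|^2$, rather than a higher-order quantity. Once this book-keeping is carried out, the rest is a standard Gronwall iteration, and the restriction to a finite time $T$ arises exactly from the superlinear factor $1+\|u^n\|_{h^2}$ on the right-hand side of the $h^2$ recursion.
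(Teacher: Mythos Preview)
Your proposal is correct and follows essentially the same route as the paper: energy-testing the scheme (and its $D_+D_-$ version) against the half-step quantity so that the fractional term drops via Lemma~\ref{P_fracLap}\ref{itm2.3}, using Lemma~\ref{wllemma} under the CFL to control $\|u^{n+1/2}\|_{h^2}$ in terms of $\|u^n\|_{h^2}$, then a bootstrap giving a finite-time $h^2$ bound, followed by the temporal bound via subtraction of consecutive steps and finally reading off $\|\mathbb D^\alpha Du^{n+1/2}\|$ directly from the scheme. The only nuance is that what you call a ``standard Gronwall iteration'' is, because of the quadratic factor $\|u^{n+1/2}\|_{h^2}^2$, actually a Riccati-type comparison: the paper makes this explicit by comparing $\|u^n\|_{h^2}$ with the solution of $y'=2(K+1)^2y^2$, $y(0)=\max(\|u_0\|_{H^2},\|u_0\|_{H^{1+\alpha}})$, which is exactly what produces the finite blow-up time $T$ you mention.
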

\begin{proof}
Motivated by the analysis established in \cite[Lemma 2.7]{dutta2016convergence}, we perform the difference operator $D_+D_-$ on the scheme \eqref{CNscheme} and taking the inner product with $D_+D_- u^{n+1/2}$ amounts to
 \begin{equation}
   \|D_+D_-u^{n+1}\|^2 =\|D_+D_-u^n\|^2 - 2\Delta t \langle D_+D_-\mathbb{G}(u^{n+1/2}), D_+D_-u^{n+1/2}\rangle,
   \end{equation}
where we have used  Lemma \ref{P_fracLap} for the fractional term. We estimate the nonlinear part on the right hand side by following steps from \cite[Lemma 2.7]{dutta2016convergence}, which yields 
\begin{equation*}
    |\langle D_+D_-\mathbb{G}(u^{n+1/2}), D_+D_-u^{n+1/2}\rangle| \leq \sqrt{\frac{3}{2}}\norm{D_+D_-u^{n+1/2}} \norm{u^{n+1/2}}^2_{h^2}. 
\end{equation*}
We can get similar estimate for the lower order derivative term. This further gives the following bound 
\begin{equation}\label{timeb}
   \norm{u^{n+1}}_{h^2} \leq  \norm{u^n}_{h^2} +  8\Delta t \norm{u^{n+1/2}}^2_{h^2}.
\end{equation}
Let us introduce a differential equation $$y'(t) = 2(K+1)^2y(t)^2,\quad t>0; \qquad y(0) = \max\{\|u_0\|_{H^{2}},\|u_0\|_{H^{1+\alpha}}\}=:y_0.$$ It is observed that the solution $y(t)$ of the above differential equation is convex and increasing for all $t<T:=T_\infty/2$, where  ${T}_\infty = 1/(2(K+1)^2y_0) $.
     
Next we claim that 
\begin{align}\label{stab:temp_1}
\|u^n\|_{h^2}\leq y(t_n)\leq Y \quad \text{for } t_n\leq T.
\end{align}
We proceed by mathematical induction. The claim is obvious for $n=0$ by the Lemma \ref{D_C}. Now we assume that the estimate holds for $n=1,\dots,m.$ As $\|u^m\|_{h^2}\leq y(t_m) \leq y(T) =:Y(y_0)$, then the CFL condition \eqref{CFLCNi} implies \eqref{CNCFL}. Thus by the Lemma \ref{wllemma}, we have 
     \begin{equation}\label{n+1bound}
         \|u^{m+1/2}\|_{h^2} \leq \frac{(K+1)}{2}\|u^m\|_{h^2}.
     \end{equation}
Since $y$ is increasing and convex, then \eqref{timeb} and \eqref{n+1bound} yield the following estimate
    \begin{equation*}
        \begin{split}
            \|u^{m+1}\|_{h^2} \leq \|u^{m}\|_{h^2} +2\Delta t((K+1)\|u^{m}\|_{h^2})^2 \leq &  y(t_m) +2\Delta t((K+1)y(t_m))^2\\
            \leq & y(t_m) +\int_{t_m}^{t_{m+1}}2(K+1)^2y(t_m)^2\,dt\\
            \leq &   y(t_m) +\int_{t_m}^{t_{m+1}}y'(s)\,ds = y(t_{m+1}).
        \end{split}
    \end{equation*}
This proves that $\|u^n\|_{h^2} \leq y(T) = Y$, $(n+1)\Delta t<T$. Hence the estimate \eqref{stab:temp_1} holds. 

From the scheme \eqref{CNscheme}, we have
    \begin{equation*}
         u_j^{n+1} = u_j^{n} - \Delta t \mathbb{G}(u^{n+1/2}) - \Delta t \mathbb{D}^{\alpha} Du_j^{n+1/2}
    \end{equation*}
    and
    \begin{equation*}
         u_j^{n} = u_j^{n-1} - \Delta t \mathbb{G}(u^{n-1/2}) - \Delta t \mathbb{D}^{\alpha} Du_j^{n-1/2},
    \end{equation*}
    which implies
    \begin{equation*}
         D_+^tu_j^{n} = D_+^tu_j^{n-1} - \left(\mathbb{G}(u^{n+1/2}) -\mathbb{G}(u^{n-1/2})\right) - \Delta t \mathbb{D}^{\alpha} D (D_+^tu_j^{n-1/2}).
    \end{equation*}
    Taking inner product with $D_+^tu_j^{n-1/2} = \frac{1}{2}(D_+^tu_j^{n} + D_+^tu_j^{n-1})$, we obtain
    \begin{equation*}
    \begin{split}
       \frac{1}{2} \| D_+^tu^{n}\|^2 - & \frac{1}{2} \| D_+^tu^{n-1}\|^2 = -\left\langle \left(\mathbb{G}(u^{n+1/2}) -\mathbb{G}(u^{n-1/2})\right), D_+^tu^{n-1/2}\right\rangle\\
       = & -\left\langle \left(\Tilde u^{n+1/2}Du^{n+1/2} - \Tilde u^{n-1/2}Du^{n-1/2}\right), D_+^tu^{n-1/2}\right\rangle\\
       = & -\Delta t\left\langle \left(D_+^t\Tilde u^{n-1/2}Du^{n+1/2} + \Tilde u^{n-1/2}DD_+^tu^{n-1/2}\right), D_+^tu^{n-1/2}\right\rangle\\
       \leq & \Delta t \norm{Du^{n+1/2}}_{\infty}\norm{D_+^tu^{n-1/2}}^2 + \Delta t \left\langle D(\Tilde u^{n-1/2}D_+^tu^{n-1/2}), D_+^tu^{n-1/2}\right\rangle\\
       \leq & \Delta t \norm{Du^{n+1/2}}_{h^2}\norm{D_+^tu^{n-1/2}}^2 + \Delta t\Biggr[\frac{\Delta x}{2} \left\langle D_+\Tilde u^{n-1/2}DD_+^tu^{n-1/2}, D_+^tu^{n-1/2}\right\rangle \\& \qquad\qquad\qquad \qquad \qquad \qquad\qquad \qquad + \frac{1}{2}\left\langle S^-D_+^tu^{n-1/2}D\Tilde u^{n-1/2} ,D_+^tu^{n-1/2} \right\rangle\Biggr]\\
       \leq &C \Delta t  \left(\norm{D_+^tu^{n}}^2 + \norm{D_+^tu^{n-1}}^2\right), 
    \end{split}
    \end{equation*}
    where we have used \eqref{Dz_1z_2} and  the discrete Sobolev inequality $\|Du^n\|_{\infty}\leq \|u^n\|_{h^2} \leq C$. Assuming $\Delta t$ is small enough such that $1-C\Delta t \geq \frac{1}{2}$, then we have 
    \begin{align*}
          \| D_+^tu^{n}\|^2  \leq  \| D_+^tu^{n-1}\|^2 +2 \Delta t \norm{D_+^tu^{n-1}}^2.
    \end{align*}
    By setting $\Gamma_n = \| D_+^tu^{n-1}\|^2$ for every $n\Delta t \leq T$, we see that 
    \begin{equation*}
         \Gamma_{n+1} \leq \Gamma_n + 2 C \Delta t \Gamma_n.
     \end{equation*}
Let $A(t)$ solves the differential equation 
    $$A'(t) =  2CA(t), \quad A(t_1) = \Gamma_1.$$
Note that $\Gamma_1$ is finite as $\|u^1\|$ and $\|u^0\|$ are bounded by \eqref{stabcn} and there exist a sufficiently large $\bar T$ such that the solution $A(t)$ is bounded for every $t<T<\bar T$. Clearly, A is increasing and convex, then $A(t_n)\leq A(T)$ for every $t_n\leq T$. We claim that $\Gamma_n\leq A(t_n)$ for $t_n\leq T$. We use the mathematical induction, as this holds for $n=1$ by construction. Let  $\Gamma_n\leq A(t_n)$ for $n=2,\dots,N$, then
     \begin{equation*}
         \begin{split}
             \Gamma_{N+1} \leq \Gamma_N + 2 C \Delta t \Gamma_N  \leq & A(t_N) + 2C\Delta tA(t_N) \\
             = & A(t_N) + \int_{t_N}^{t_{N+1}} 2 C A(t_N)\,dt\\
             \leq & A(t_N) + \int_{t_N}^{t_{N+1}} A'(s)\,ds = A(t_{N+1}),
             \end{split}
     \end{equation*}
where we have used that $A$ is increasing. Hence for $t_n\leq T$, $\Gamma_{n+1} = \| D_+^tu^{n}\|^2\leq A(T)=:C$, where $C$ is a constant independent of $\Delta x$. Finally we end up with
     \begin{equation*}
        \|\mathbb{D}^{\alpha}Du^{n+1/2}\|\leq \|D_+^tu^n\| + \|u^n\|_{\infty}\|Du^n\|\leq C, \qquad t_n\leq T.
    \end{equation*}
   This implies $ \|u^n\|_{h^{1+\alpha}} \leq C $ for $t_n\leq T.$
   Hence the result follows.
\end{proof}
\subsection{Convergence of approximate solution}
We adopt the approach outlined by Sjoberg \cite{sjoberg1970korteweg} to establish the convergence of the scheme \eqref{CNscheme} for $t<T$. Our reasoning unfolds through the construction of a piecewise continuous interpolation, denoted as $u_{\Delta x}$, defined by \eqref{Interpolation2}.
With this, we are poised to present the main result of this section.
\begin{theorem}
Assume that $\|u_0\|_{h^{1+\alpha}},~\alpha\in[1,2)$, is finite. Let $\{u_{\Delta x}\}_{\Delta x>0}$ be a sequence of approximate solutions obtained by the scheme \eqref{CNscheme} of the fractional KdV equation \eqref{fkdv}. Then there exists a finite time $T>0$ and a constant $C$, depending on $\|u_0\|_{h^{1+\alpha}}$  such that 
    \begin{align}
        \label{bb1}\|u_{\Delta x}(\cdot,t)\|_{L^2(\mathbb{R})} &\leq \|u_0\|_{L^2(\mathbb{R})},\\
       \label{bb2} \|\partial_xu_{\Delta x}(\cdot,t)\|_{L^2(\mathbb{R})} &\leq C,\\
         \label{bb3} \|\partial_tu_{\Delta x}(\cdot,t)\|_{L^2(\mathbb{R})} &\leq C,\\
          \label{bb4} \left\|(-\Delta)^{\alpha/2}\partial_xu_{\Delta x}(\cdot,t)\right\|_{L^2(\mathbb{R})} &\leq C,
    \end{align}
for $\Delta t = \mathcal{O}(\Delta x)$. Moreover, the sequence of approximate solutions $\{u_{\Delta x}\}_{\Delta x>0}$ converge uniformly to the unique solution of the fractional KdV equation \eqref{fkdv} in $C(\mathbb{R}\times [0,T]) $  as $\Delta x\to 0$.
\end{theorem}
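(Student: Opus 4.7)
The plan is to mirror the convergence argument used for the semi-implicit scheme, promoting the discrete estimates of Lemma \ref{Stablemma} to continuous bounds via the interpolant \eqref{Interpolation2}, and then identifying the limit through a Lax--Wendroff type argument. First I would derive \eqref{bb1}--\eqref{bb4}: exact integration of the squared interpolation over each cell $[x_j,x_{j+1})\times[t_n,t_{n+1})$ reduces each continuous $L^2$-norm to a discrete $\ell^2$-norm of $u^n$, $Du^n$, $D_+^t u^n$, or $\mathbb{D}^{\alpha}Du^n$. The bounds \eqref{stabcn}, \eqref{tempbound}, \eqref{stabcn2} of Lemma \ref{Stablemma} then yield \eqref{bb1}--\eqref{bb3} immediately, and \eqref{bb4} follows because $\partial_xu_{\Delta x}(\cdot,t)$ is piecewise constant in $x$, so Lemma \ref{D_C} applies to the grid function $\{Du_j^n\}_j$ together with the bound \eqref{stabcn2}.

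Second, from \eqref{bb1} and \eqref{bb3} the family $\{u_{\Delta x}\}_{\Delta x>0}$ is uniformly bounded and uniformly Lipschitz in $t$ with values in $L^2(\R)$, i.e.\ $u_{\Delta x}\in \mathrm{Lip}([0,T];L^2(\R))$. By Arzel\`a--Ascoli there is a subsequence $\Delta x_{j_k}\to 0$ and a limit $u$ with $u_{\Delta x_{j_k}}\to u$ uniformly in $C([0,T];L^2(\R))$. The bounds \eqref{bb2} and \eqref{bb4} pass to the limit weakly, placing $u(\cdot,t)\in H^{1+\alpha}(\R)$ uniformly in $t\in[0,T]$.

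Third, I would identify $u$ as a weak solution via a Lax--Wendroff type computation. Taking a test function $\vp\in C_c^\infty(\R\times[0,T])$ with $\vp_j^n:=\vp(x_j,t_n)$, I multiply \eqref{CNscheme} by $\Delta t\,\Delta x\,\vp_j^n$ and sum over $j\in\Z$ and $n$. Discrete summation by parts in time converts the difference quotient into $-\int_0^T\int_\R u\vp_t\,dx\,dt-\int_\R \vp(x,0)u_0\,dx$, using uniform convergence \eqref{CONV_Utouimp}. For the convective contribution, the product formula \eqref{prod1} rewrites $\widetilde{u}^{n+1/2}Du^{n+1/2}$ as $\tfrac{1}{2}D(u^{n+1/2})^2$ up to a term that vanishes as $\Delta x\to 0$ thanks to the $h^2$-bound \eqref{stabcn} and the temporal bound \eqref{tempbound} (the latter controls $u^{n+1}-u^n=\Oh(\Delta t)$ in an appropriate norm so that $u^{n+1/2}$ may be replaced by $u^n$ in the limit); after summation by parts in $x$ this produces $-\int_0^T\int_\R (u^2/2)\vp_x\,dx\,dt$. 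For the fractional term, using Lemma \ref{P_fracLap} I rewrite
\[
\Delta t\,\Delta x\sum_{n,j}\vp_j^n\,\mathbb{D}^{\alpha}Du_j^{n+1/2}=-\Delta t\sum_n\langle u^{n+1/2},\mathbb{D}^{\alpha}D\vp^n\rangle,
\]
and by Lemma \ref{consit} applied to $\vp_x(\cdot,t_n)$ together with the uniform convergence of $u^{n+1/2}$ to $u(\cdot,t_n)$ in $L^2(\R)$, this converges to $-\int_0^T\int_\R u\bigl(-(-\Delta)^{\alpha/2}\vp_x\bigr)dx\,dt$. Assembling these limits gives the weak identity \eqref{weaks_Odximp} for $u$.

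Finally, the uniform bounds \eqref{bb1}--\eqref{bb4} elevate the weak identity to an $L^2$-identity, so $u$ solves \eqref{fkdv} classically in the sense of an $L^2$-valued ODE, and uniqueness in $H^{1+\alpha}(\R)$ (see \cite{kenig1991well,kenig1993cauchy}) forces the whole sequence $\{u_{\Delta x}\}$ to converge, not merely a subsequence. The main obstacle I foresee is the treatment of the Crank--Nicolson midpoint $u^{n+1/2}=(u^n+u^{n+1})/2$ in the nonlinear term: in contrast to the semi-implicit case of Section \ref{sec3}, $\mathbb{G}(u^{n+1/2})$ is bilinear in the average and one must invoke the temporal-derivative bound \eqref{tempbound} to justify that the midpoint may effectively be replaced by the left endpoint when passing to the continuous limit, while still preserving the symmetry that underlies the conservation properties emphasized in the introduction. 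The fractional term itself is conceptually routine once Lemmas \ref{P_fracLap} and \ref{consit} are in hand, but must be tracked carefully through the midpoint averaging.
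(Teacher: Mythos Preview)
Your proposal is correct and follows essentially the same route as the paper: promote the discrete bounds of Lemma~\ref{Stablemma} to \eqref{bb1}--\eqref{bb4} via the interpolant, invoke Arzel\`a--Ascoli for compactness, and run a Lax--Wendroff argument term by term, using Lemma~\ref{P_fracLap} and Lemma~\ref{consit} for the fractional contribution. The only cosmetic difference is that the paper dispatches the time-derivative and nonlinear terms by citing \cite[Theorem~2.8]{dutta2016convergence}, whereas you spell out the rewriting $\widetilde{u}^{n+1/2}Du^{n+1/2}=\tfrac{1}{2}D(u^{n+1/2})^2+\mathcal{O}(\Delta x^2)$ and the midpoint-to-endpoint replacement via \eqref{tempbound}; both are valid and lead to the same limit identity.
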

\begin{proof}
Since the interpolation function $u_{\Delta x}$, defined by \eqref{Interpolation2}, is smooth, we can explicitly express its spatial and temporal derivatives:
    \begin{align}
        \label{int2}\partial_x u_{\Delta x}(x,t) =& Du_j^n + (t-t_n)D_+^t\left(Du_j^n\right),\\
        \label{int3}\partial_t u_{\Delta x}(x,t) =& D_+^tu^n(x).
    \end{align}
The above expressions imply the estimates \eqref{bb1}, \eqref{bb2} and \eqref{bb3} for $t \leq T$ by integrating over $[x_j,x_{j+1})$ the square of \eqref{Interpolation2}, \eqref{int2} and \eqref{int3}, respectively, followed by the summation over $j$. Since for $t<T$, $\partial_x u_{\Delta x}(\cdot,t)$ is constant in the interval $[x_j,x_{j+1})$ for all $j\in \mathbb Z$, we can apply the Lemma \ref{D_C} and the estimate \eqref{stabcn2} from Lemma \ref{Stablemma} to establish the validity of \eqref{bb4} as $\Delta x \xrightarrow[]{}0$.
    
The temporal derivative bound on the approximate solutions $u_{\Delta x}$ establishes that $u_{\Delta x}\in \text{Lip}([0,T];L^2(\mathbb{R}))$ for every possible $\Delta x>0$. Employing the bound \eqref{bb1}, we can apply the Arzelà-Ascoli theorem, which implies that the set of approximate solutions $\{u_{\Delta x_j}\}_{j\in\mathbb Z}$ is sequentially compact in $C([0,T];L^2(\mathbb{R}))$. Consequently, this implies the existence of a subsequence $\Delta x_{j_{_{k}}}$ such that 
\begin{equation}\label{CONV_Utou}
    u_{\Delta x_{j_{_{k}}}} \xrightarrow[]{\Delta x_j\xrightarrow[]{} 0} u \text{ uniformly in } C([0,T];L^2(\mathbb{R})).
\end{equation}
Our next aim is to demonstrate that the limit $u$ satisfies \eqref{weaks_Odximp}. For this, we proceed through a Lax-Wendroff type argument inspired by \cite{holden1999operator}. It is convenient to use simple interpolation function given by \eqref{simpintp} since $\bar{u}_{\Delta x}\xrightarrow[]{\Delta x\xrightarrow[]{} 0} u$ in $L^\infty([0,T];L^2(\mathbb{R}))$. 

We proceed similarly as in the previous section by considering a test function $\varphi\in C_c^\infty(\R\times[0,T])$ and denote $\varphi(x_j,t_n) = \varphi_j^n$ at nodes. We multiply $ \Delta t \Delta x \varphi_j^n $ to the scheme \eqref{CNscheme} and summing it over all $j$ and $n$ to obtain 
    \begin{align}
    \nonumber \Delta t \Delta x \sum_n\sum_j\varphi_j^n D^t_+u_j^{n} +&  \Delta t \Delta x \sum_n\sum_j\varphi_j^n \mathbb G(u^{n+1/2})\\ \label{Sumscheme} +& \Delta t \Delta x \sum_n\sum_j\varphi_j^n \mathbb{D}^{\alpha} Du_j^{n+1/2} = 0, \quad n\Delta t\leq T, \quad j\in \mathbb{Z}.
\end{align}
With the help of \cite[Theorem 2.8]{dutta2016convergence}, we have
    \begin{align*}
        \Delta t \Delta x \sum_n\sum_j\varphi_j^n D^t_+u_j^{n} \xrightarrow[]{} -\int_0^T\int_\R u\varphi_t\,dx\,dt -  \int_{\R}\varphi(x,0)u_0(x)\,dx \text{ as } \Delta x \xrightarrow[]{} 0,
    \end{align*}
    and 
     \begin{align*}
         \Delta t \Delta x \sum_n\sum_j\varphi_j^n \mathbb G(u^{n+1/2}) \xrightarrow[]{} -\int_0^T\int_\R \frac{u^2}{2}\varphi_x\,dx\,dt \text{ as } \Delta x \xrightarrow[]{} 0.
    \end{align*}
For the term involving the discrete fractional Laplacian, we use summation-by-parts to conclude that
    \begin{equation*}
        \Delta t \Delta x \sum_n\sum_j\varphi_j^n \mathbb{D}^{\alpha} Du_j^{n+1/2} \xrightarrow[]{}-\int_0^T\int_\R -(-\Delta)^{\alpha/2}\varphi_x u \,dx\,dt\text{ as } \Delta x \xrightarrow[]{} 0.
    \end{equation*}
Consequently, we have demonstrated that $u$ satisfies \eqref{weaks_Odximp}, signifying that $u$ is a weak solution of the fractional KdV equation \eqref{fkdv}.
Hence we can conclude from the estimates \eqref{bb1}-\eqref{bb4} that the weak solution $u$ becomes a strong solution, which satisfies equation \eqref{fkdv} as an $L^2$-identity. Thus, considering the initial data $u_0\in H^{1+\alpha}(\R)$, $u$ becomes the unique solution of the fractional KdV equation \eqref{fkdv}. This completes the proof.
\end{proof}
\begin{remark}($L^2$-conservative)\\
Beyond mere formulation, the fully discrete scheme \eqref{CNscheme} not only demonstrates its computational capability but also exhibits a remarkable $L^2$-conservativity. To justify the assertion, we perform a summation over $j$ after multiplying $\Delta x u_j^{n+1/2}$ in \eqref{CNscheme}, yielding the equation:
\begin{equation*}
    \|u^{n+1}\|^2 = \|u^n\|^2 - \Delta t \langle \mathbb{G}(u^{n+1/2}) , u^{n+1/2}\rangle - \Delta t \langle \mathbb{D}^{\alpha}D(u^{n+1/2}), u^{n+1/2}\rangle.
\end{equation*}
This further establishes the $L^2$-conservative nature of the scheme \eqref{CNscheme} $$\|u^{n+1}\|= \|u^n\|,$$ where we have incorporated the orthogonality conditions $\langle \mathbb{G}(u) , u \rangle = 0$ and $\langle \mathbb{D}^{\alpha} D(u), u \rangle = 0$.
\end{remark}
\section{Numerical experiment}\label{sec5}
In this section, we verify the schemes \eqref{FDscheme} and \eqref{CNscheme} with a series of numerical illustrations. We follow the conventional approach, which typically involve a periodic case of the initial value problem with periodic initial data. We consider a large enough domain in all the cases such that the initial data is compactly supported within the domain, for instance, kindly refer to \cite{dutta2016convergence,dwivedi2023stability,dwivedi2023convergence,holden2015convergence}.
However, in particular, our theoretical study in this paper focuses on the convergence of the approximated solution on the real line. To address this, we discretize the domain that is large enough in space for the reference solutions (exact or higher-grid solutions) to be nearly zero outside of it. Exact solutions are available for the cases $\alpha=1$ and $\alpha=2$.  

Let us denote the approximate solutions by $u_{\Delta x}^{EI}$ and $u_{\Delta x}^{CN}$ generated by the Euler implicit \eqref{FDscheme} and Crank-Nicolson \eqref{CNscheme} schemes respectively. We introduce the relative error as
 \begin{equation*}
     E_{EI}:= \frac{\|u_{\Delta x}^{EI}-u\|_{L^2}}{\|u\|_{L^2}}, \qquad E_{CN}:= \frac{\|u_{\Delta x}^{CN}-u\|_{L^2}}{\|u\|_{L^2}},
 \end{equation*}
 where $E_{EI}$ and $E_{CN}$ are the relative errors corresponding to the Euler implicit \eqref{FDscheme} and Crank-Nicolson \eqref{CNscheme} respectively. The $L^2$-norms involved above were computed at the grid points \(x_j\) by trapezoidal rule.
 
Hereby we examine the first three specific quantities—\textit{mass}, \textit{momentum} and \textit{energy} for the scheme \eqref{CNscheme}—as introduced in \cite{kenig1993cauchy}. These quantities, being normalized, are defined as follows:
\begin{align*}
    C^{\Delta}_1 &:= \frac{\int_{\mathbb{R}} u_{\Delta x}^{CN}\,dx}{\int_{\mathbb{R}} u_0\,dx},\qquad
    C^{\Delta}_2 := \frac{\|u_{\Delta x}^{CN}\|_{L^2(\R)}}{\norm{u_0}_{L^2(\R)}},\\
    C^{\Delta}_3 &:= \frac{\int_{\R} \left(((-\Delta)^{\alpha/4}u_{\Delta x}^{CN})^2 - \frac{1}{3}(u_{\Delta x}^{CN})^3\right)~dx}{\int_{\R} \left(((-\Delta)^{\alpha/4}u_0)^2 - \frac{1}{3}(u_0)^3\right)~dx}.
\end{align*}
Our objective is to preserve these quantities within our discrete framework, ensuring that $C^{\Delta}_i \xrightarrow[]{} 1$, $i=1,2,3$ as number of nodes $N$ increases.
 It is worth noting that in the domain of integro partial differential equations, maintaining a greater number of conserved quantities through numerical methods often leads to more accurate approximations compared to those preserving fewer quantities.

Moreover, we investigate the convergence rates of the numerical schemes \eqref{FDscheme} and \eqref{CNscheme}, labeled as $R_{EI}$ and $R_{CN}$ respectively. The convergence rates are computed using the expressions:
\begin{equation*}
    R_{EI} =  \frac{\ln(E_{EI}(N_1))-\ln(E_{EI}(N_2))}{\ln(N_2)-\ln(N_1)} ~~~\text{ and }~~~ R_{CN} = \frac{\ln(E_{CN}(N_1))-\ln(E_{CN}(N_2))}{\ln(N_2)-\ln(N_1)},
\end{equation*}
where $E_{EI}$ and $E_{CN}$ are treated as functions dependent on the number of nodes $N_1$ and $N_2$.
The CFL condition was imposed with the time step $\Delta t = 0.5\Delta x/\|u_0\|_{\infty}$.

The validation of our exposition will manifest in the forthcoming examples:

\subsection{Benjamin-Ono equation}{$\alpha= 1$}:\\
Let us consider the solution of the Benjamin-Ono equation as introduced in \cite{thomee1998numerical}:

\begin{equation}\label{BOsolution}
    u_1(x,t) = \frac{2c\delta}{1-\sqrt{1-\delta^2}\cos(c\delta(x-ct))}, \qquad \delta = \frac{\pi}{cL}.
\end{equation}

We implemented both the schemes using the initial data $u^0(x) = u_1(x,0)$ with parameters  $L=15$ and $c=0.25$. By choosing $\alpha=1$ in both schemes \eqref{FDscheme} and \eqref{CNscheme}, we compare the obtained approximate solutions with the reference solution given by \eqref{BOsolution} at the time $t=20$, $t=100$ and $t=120$. Since the solution is periodic, the time $t=120$, represents one period for the exact solution \eqref{BOsolution}.

A graphical representation of the results for $N=512$ is provided in Figure \ref{fig:BOoneCN} for the time $t=20$, $t=100$ and $t=120$. The plot distinctly shows that the approximate solution $u_{\Delta x}^{CN}$ closely aligns with the exact solution compared to the approximate solution $u_{\Delta x}^{EI}$. This observation is further substantiated by the errors presented in Table \ref{tab:TableBOCN} for the time $t=120$ and for the other times outputs are very similar, where the errors exhibit a decreasing trend with a rate of approximately $1$ for the Euler implicit scheme \eqref{FDscheme} and $2$ for the Crank-Nicolson scheme \eqref{CNscheme}. Furthermore, Table \ref{tab:TableBOCN} verifies that the Crank-Nicolson scheme \eqref{CNscheme} conserves the aforementioned quantities $C_i^{\Delta}$, $i=1,2,3$.

\begin{figure}
    \centering
    \includegraphics[width=0.9 \linewidth, height=10cm]{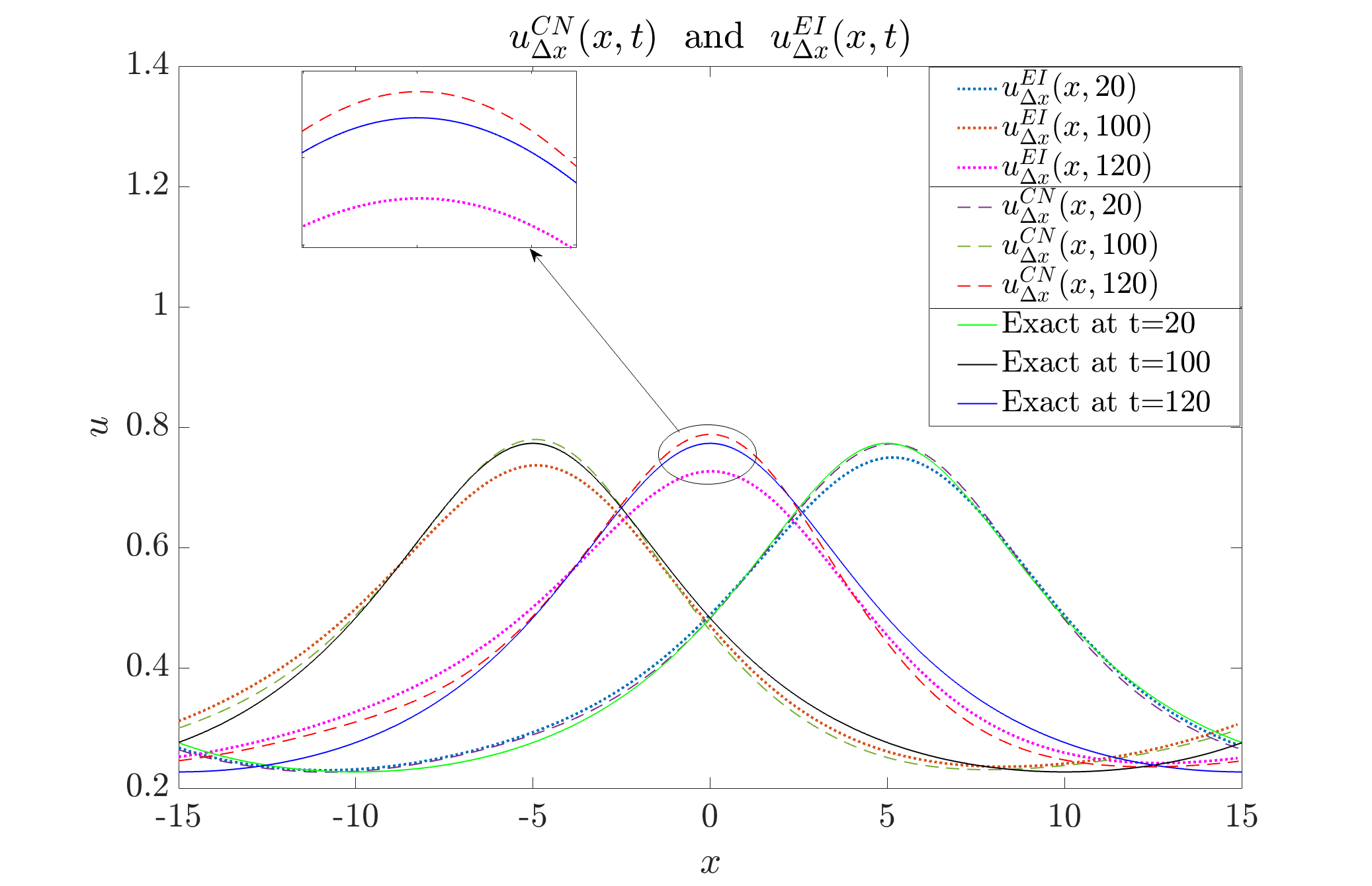}
    \caption{The exact solution $u_1$ and approximate solutions $u_{\Delta x}^{EI}$ and $u_{\Delta x}^{CN}$ at $t=20$, $t=100$ and $t=120$ with $N=512$ and $\alpha=1$.}
    \label{fig:BOoneCN}
\end{figure}

\begin{table}
  \centering
  \begin{tabular}{||c|c|c|c|c|c|c|c||}
        \hline
 N & $E_{EI}$ & $R_{EI}$ & $E_{CN}$ & $R_{CN}$ & $C_1^{\Delta}$ & $C_2^{\Delta}$ & $C_3^{\Delta}$ \\
 \hline
 \hline
   64  & 0.0216 & & 0.0650 &  & 1.052 & 2.47 & 1.052  \\
  &  & 1.100 & &  2.949 & & &\\
  128 & 0.0101 & &0.0084 && 1.003 & 1.04 & 1.04 \\
  &  &  0.947&   &  2.150 & &&\\
 256 & 0.0052 & &0.0019 & &1.000 & 1.000 & 1.000  \\
  &  & 1.056&  &   2.013 &&& \\
 512 & 0.0025 & &4.7034e-04 & &1.000 & 1.000 & 1.000 \\
   &  & 1.049&    & 2.004 &&& \\
 1024 & 0.0012 && 1.1720e-04 & &1.000 & 1.000 & 1.000 \\
  &  &  &  & && &\\
\hline
    \end{tabular}
   \caption{Convergence rates for approximate solutions with $\alpha= 1$.}
   \label{tab:TableBOCN}
\end{table}


\subsection{Fractional KdV equation}$\alpha =1.5$: \\
We assess the convergence using the initial condition $u_0(x) = 0.5 \sin(x)$ within the interval $[-4\pi,4\pi]$. Once again, by selecting $\alpha = 1.5$ in both schemes \eqref{FDscheme} and \eqref{CNscheme}, we compare the approximate solutions at time $t=5$ with the reference solution obtained using a higher number of grid points, $N=32000$, as the exact solution is unknown for this case. Table \ref{tab:Tablealpha15IMP} affirms the convergence of both schemes and quantities $C_i^{\Delta}$, $i=1,2,3$, are conserved for the Crank-Nicolson scheme \eqref{CNscheme}, and Figure \ref{fig:alpha15smoothCNFD} illustrates that the Crank-Nicolson scheme \eqref{CNscheme} converges more rapidly than the Euler implicit scheme \eqref{FDscheme}.
\begin{figure}
    \centering
    \includegraphics[width=0.95 \linewidth, height=8.5cm]{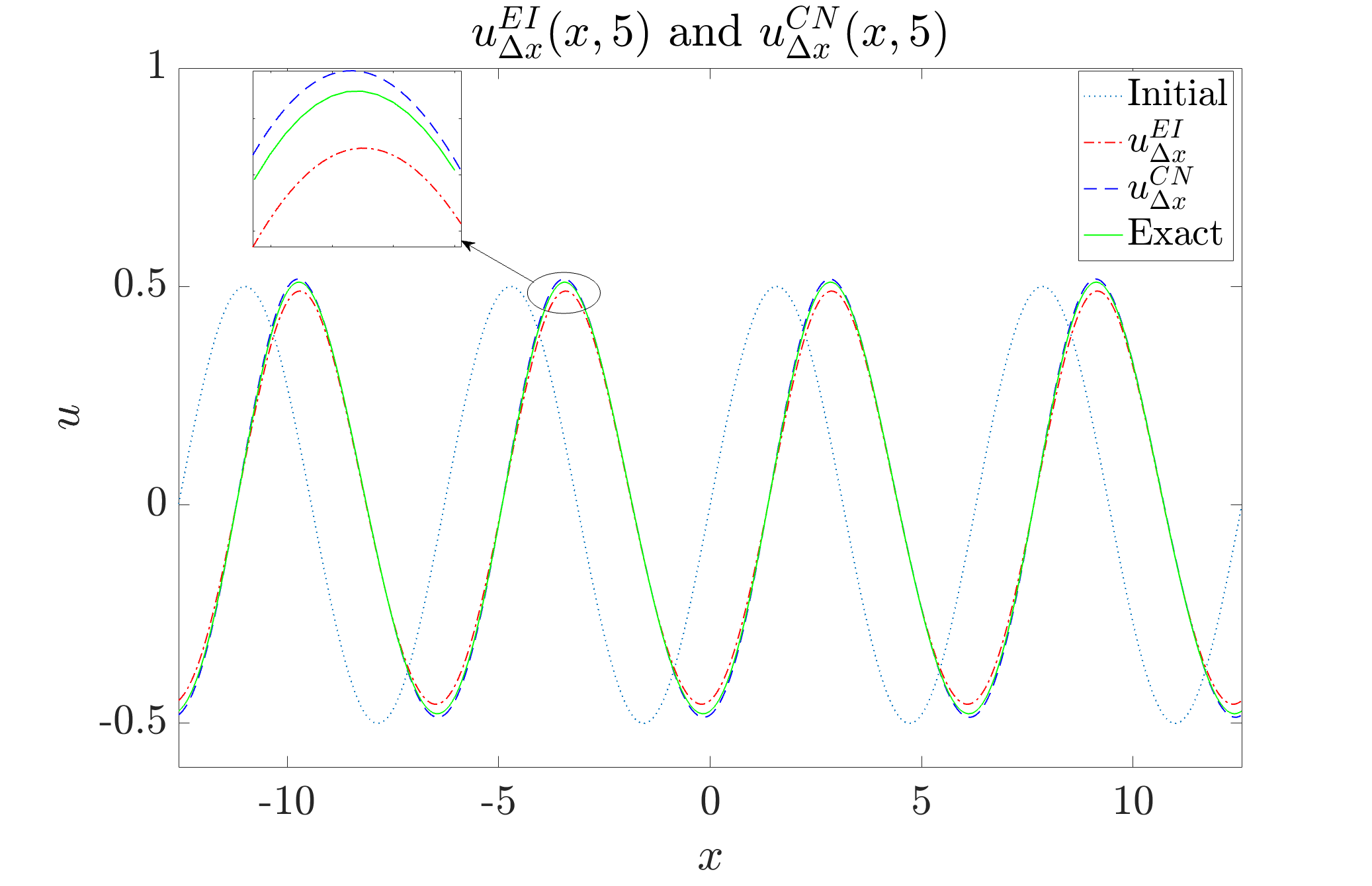}
    \caption{The reference and approximate solutions at $t=5$ with the initial data $u_0(x) = 0.5\sin(x)$, $N=1000$ grids and $\alpha=1.5$.}
    \label{fig:alpha15smoothCNFD}
\end{figure}
\begin{table}
  \centering
  \begin{tabular}{||c|c|c|c|c|c|c|c||}
        \hline
 N & $E_{EI}$ & $R_{EI}$ & $E_{CN}$ & $R_{CN}$ & $C_1$ & $C_2$ & $C_3$ \\
 \hline
 \hline
   250 & 0.5848 &  & 0.0733 & & 0.996 & 1.00  & 0.996 \\
  &  &  0.733&  &    1.418 &&&\\
  500  & 0.3517& & 0.0274 && 0.999& 1.00 &  0.999 \\
  & &  0.877 &  &    2.037&&&\\
 1000 & 0.1915  & & 0.0067& & 1.00 & 1.00 &  0.999 \\
  &  & 0.983&  &   1.979&&&\\
 2000 & 0.0969  && 0.0017 && 1.00 & 1.00 &1.00\\
   &  & 1.131 &   & 2.085&&& \\
 4000 & 0.0443  & & 0.0004 & &  1.00& 1.00 &1.00\\
  &  &  &  & &&& \\
\hline
    \end{tabular}
   \caption{Convergence rates for approximate solutions with $\alpha=1.5$.}
   \label{tab:Tablealpha15IMP}
\end{table}
   
\subsection{KdV equation}$\alpha \approx 2$: \\
Our objective is to analyze the behaviour of the solution of \eqref{fkdv} whenever the exponent $\alpha$ is close to $2$. As $\alpha=2$ correspond to the KdV equation, we compare the solutions generated by the schemes \eqref{FDscheme} and \eqref{CNscheme} by considering $\alpha\approx 2$ with the solution of KdV equation. For instance, the two-soliton solution for the KdV equation $u_t+\left(\frac{u^2}{2}\right)_x+u_{xxx}=0$ is explicitly introduced in \cite{holden2015convergence} as follows:
\begin{equation}\label{twosol}
     u_2(x,t) = 6(c-d) \frac{d \csch^2\left(\sqrt{d/2}(x-2dt)\right)+c \sech^2\left(\sqrt{c/2}(x-2ct)\right)}{\left(\sqrt{c}\tanh\left(\sqrt{c/2}(x-2ct)\right) - \sqrt{d} \coth\left(\sqrt{d/2}(x-2dt)\right)\right)^2}.
\end{equation}
Here $c$ and $d$ are any real parameters. We set the values of parameters $c=0.5$ and $d=1$. We use $u_0^{EI}(x,0) = u_2(x,-10)$ as the initial data for the scheme \eqref{FDscheme} and $u_0^{CN}(x,0) = u_2(x,-20)$ as the initial data for the scheme \eqref{CNscheme}. 


\begin{figure}
    \centering
    \includegraphics[width=0.9 \linewidth, height=10cm]{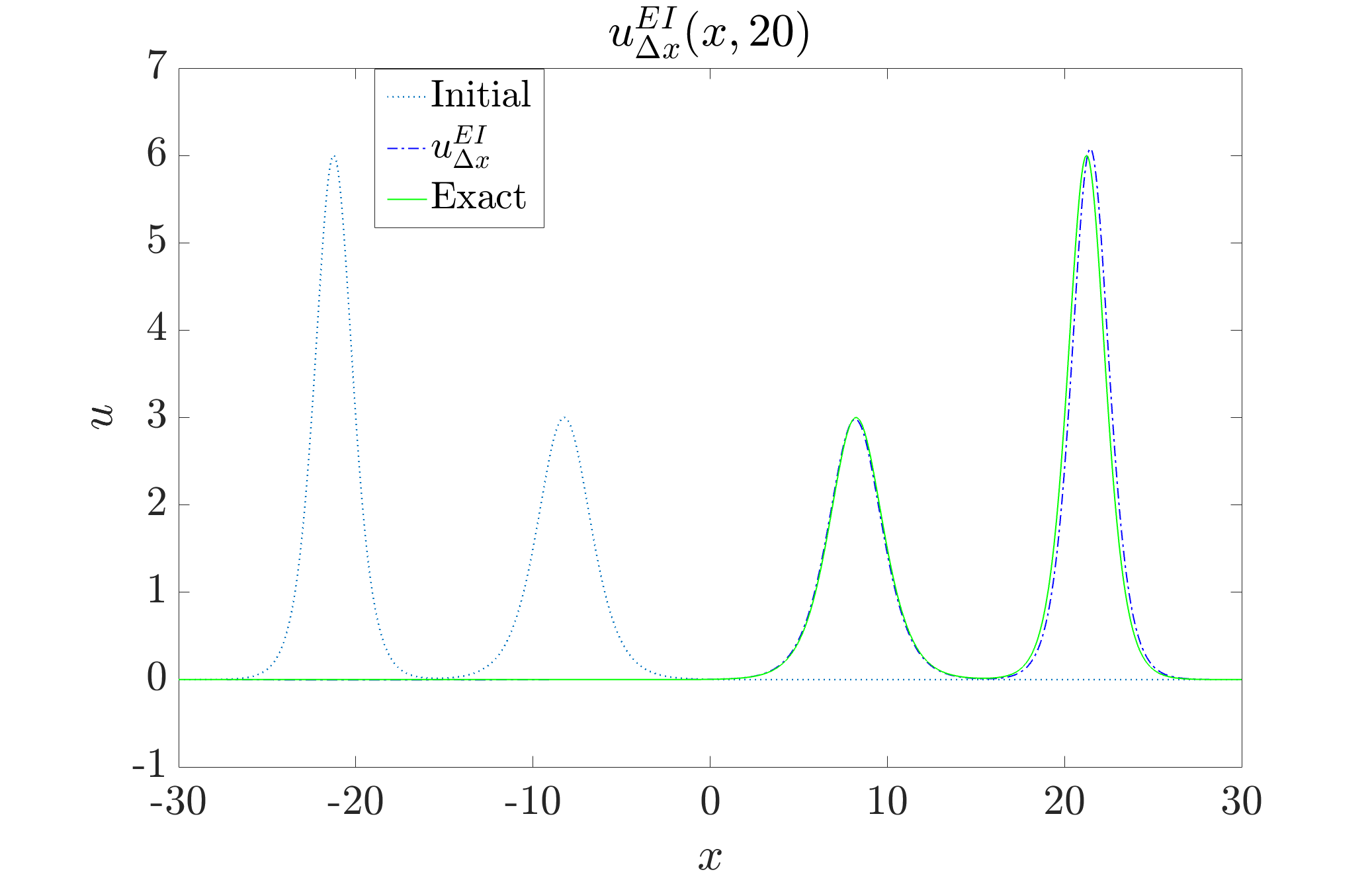}
    \caption{The exact(KdV) and numerical solution at $t=20$ with the initial data $w_2(x,-10)$ with $N=32000$ for Euler implicit.}
    \label{fig:KdVtwoim}
\end{figure}

\begin{table}
\begin{minipage}[b]{.45\textwidth}
  \centering
  \begin{tabular}{||c|c|c||}
        \hline
 N & E  & $R_E$ \\
 \hline
 \hline
   2000  & 2.5855  & \\
  &  &  1.313\\
  4000  & 1.0403  & \\
 &  &  1.029\\
 8000  & 0.5100  & \\
  &  & 1.086\\
 16000 & 0.2402 &\\
  &  & 1.048 \\
 32000 & 0.1161  &\\
  &  &  \\
\hline
    \end{tabular}
   \caption{Convergence rate of Euler implicit scheme when $\alpha\approx 2$.}
   \label{tab:TableaKDVIMP}
\end{minipage}
\begin{minipage}[b]{.50\textwidth}
  \centering
   \begin{tabular}{||c|c|c|c|c||}
        \hline
 N & E & $C_1$ & $C_2$ & $R_E$ \\
 \hline
 \hline
   250  & 1.1999 & 1.00 & 0.97 & \\
  &  &  &  &  2.942\\
  500  & 0.1561 & 1.00 & 1.01 & \\
  &  &  &  &  1.839\\
 1000  & 0.0436 & 1.00 & 1.00 & \\
  &  &  &  & 1.989\\
 2000 & 0.0110 & 1.00 & 1.00 &\\
   &  &  &  & 2.0809 \\
 4000 & 0.0026 & 1.00 & 1.00 &\\
  &  &  &  &  \\
 
\hline
    \end{tabular}
   \caption{Convergence rate of Crank-Nicolson scheme when $\alpha\approx 2$.}
   \label{tab:TableKDVCN}
\end{minipage}
\end{table}
\begin{figure}
    \centering
    \includegraphics[width=0.9 \linewidth, height=10cm]{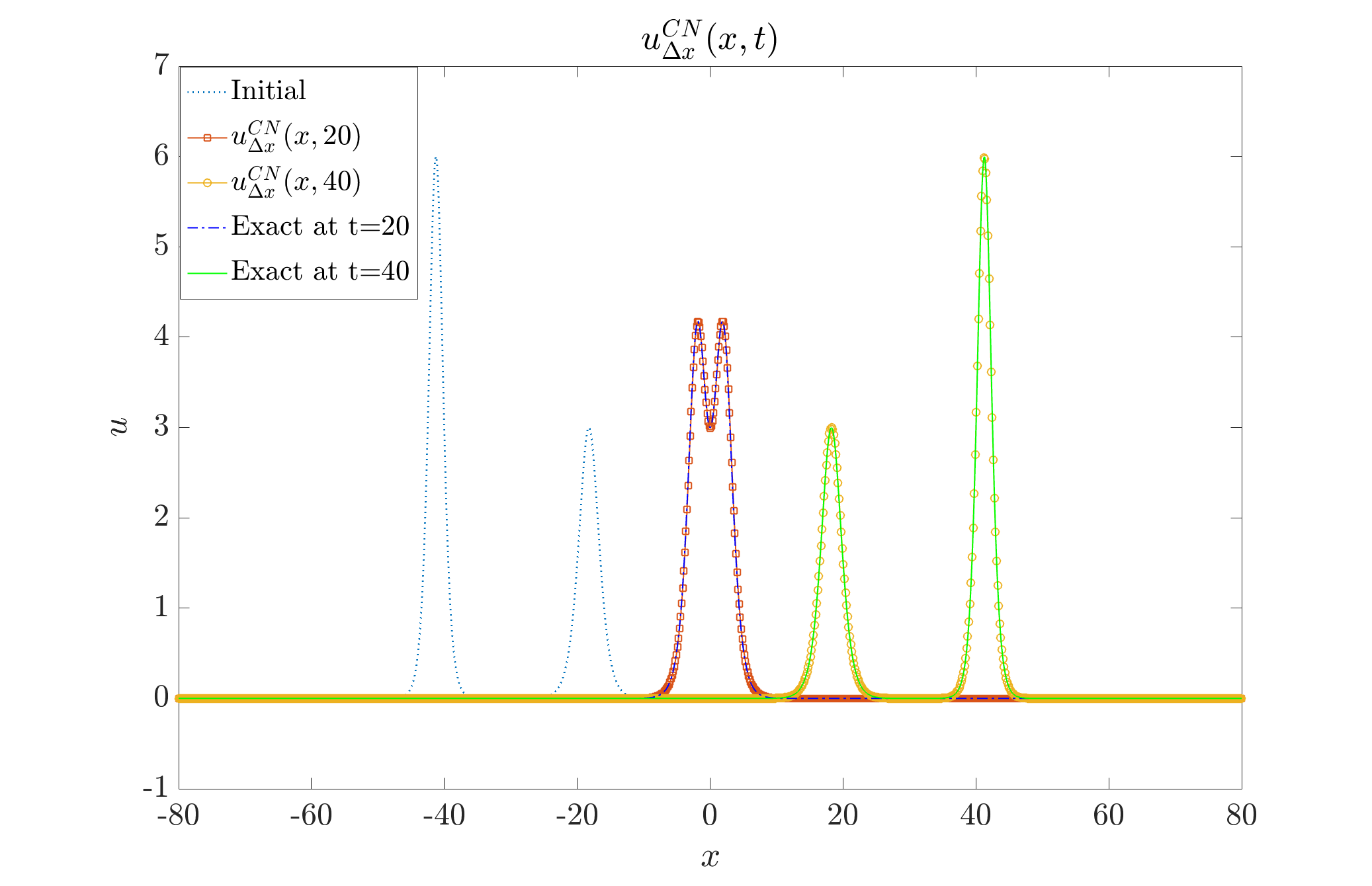}
    \caption{The exact and numerical solution obtained by Crank nicolson at $t=40$ with the initial data $w_2(x,-20)$ with $N=1000$.}
    \label{fig:KdVtwoCN}
\end{figure}

From a physical perspective, the taller soliton overtakes the shorter one, and both solitons emerge unaltered after the collision. This scenario presents a considerably more intricate computational challenge than solving for a single soliton solution. We choose $\alpha=1.999$ and compare the approximate solutions obtained by the schemes \eqref{FDscheme} and \eqref{CNscheme} to the exact solution of the KdV equation given by \eqref{twosol} at $t=20$ and $t=40$ respectively. Specifically, the approximate solution at $t=20$ obtained by the Euler implicit scheme \eqref{FDscheme} is denoted as $u_{\Delta x}^{EI}(x,20)$ and is compared with $u_2(x,10)$. The approximate solution at $t=40$ obtained by the Crank-Nicolson scheme \eqref{CNscheme} is denoted as $u_{\Delta x}^{EI}(x,40)$ and is compared with $u_2(x,20)$. Figure \ref{fig:KdVtwoCN} visually depicts that the taller soliton surpasses the smaller one at $t=20$, confirming the physical interpretation of soliton solutions. Table \ref{tab:TableaKDVIMP} and Table \ref{tab:TableKDVCN} display the convergence rates for both schemes. The Figure \ref{fig:KdVtwoim} and Figure \ref{fig:KdVtwoCN} provide graphical representations of the approximated and exact solutions of the KdV equation.

With the help of our several numerical illustrations we find that the scheme \eqref{CNscheme} converges faster than the scheme \eqref{FDscheme}. In addition, the scheme \eqref{CNscheme} preserves the conserved quantities with expected rates.
\section{Concluding remarks}\label{sec6}
In this study, we have rigorously investigated the stability and convergence properties of Euler implicit and a  Crank-Nicolson conservative finite difference scheme applied to the Cauchy problem associated with the fractional KdV equation. Notably, the Crank-Nicolson scheme exhibited superior convergence rates. Our comprehensive investigation into the stability, accuracy, and convergence of the proposed numerical schemes contributes valuable insights to the numerical analysis of the fractional KdV equation. The results presented herein not only enhance our understanding of the behavior of approximate solutions but also provide a solid foundation for future research in the numerical simulation of nonlinear dispersive equations involving the fractional Laplacian.

\end{document}